\begin{document}
\theoremstyle{plain}
\newtheorem{theorem}{{\bf Theorem}}[section]
\newtheorem{corollary}[theorem]{Corollary}
\newtheorem{lemma}[theorem]{Lemma}
\newtheorem{proposition}[theorem]{Proposition}
\newtheorem{remark}[theorem]{Remark}

\theoremstyle{definition}
\newtheorem{defn}{Definition}
\newtheorem{definition}[theorem]{Definition}
\newtheorem{example}[theorem]{Example}
\newtheorem{conjecture}[theorem]{Conjecture}

\def\im{\mathop{\rm Im}\nolimits}
\def\dom{\mathop{\rm Dom}\nolimits}
\def\rank{\mathop{\rm rank}\nolimits}
\def\nullset{\mbox{\O}}
\def\ker{\mathop{\rm ker}\nolimits}
\def\implies{\; \Longrightarrow \;}

\def\GR{{\cal R}}
\def\GL{{\cal L}}
\def\GH{{\cal H}}
\def\GD{{\cal D}}
\def\GJ{{\cal J}}

\def\set#1{\{ #1\} }
\def\z{\set{0}}
\def\Sing{{\rm Sing}_n}
\def\nullset{\mbox{\O}}

\title{On the algebraic structure of the Schr\"{o}der monoid}
\author{\bf  Muhammad Mansur Zubairu\footnote{Corresponding Author. ~~Email: \emph{mmzubairu.mth@buk.edu.ng}}, Abdullahi Umar and  Fatma Salim Al-Kharousi   \\
\it\small  Department of Mathematics, Bayero  University Kano, P. M. B. 3011, Kano, Nigeria\\
\it\small  \texttt{mmzubairu.mth@buk.edu.ng}\\[3mm]
\it\small Department of Mathematical Sciences,\\
\it\small Khalifa University, P. O. Box 127788, Sas al Nakhl, Abu Dhabi, UAE\\
\it\small  \texttt{abdullahi.umar@ku.ac.ae}\\[3mm]
\it\small  Department of Mathematics,\\
\it\small College of Science,\\
\it\small Sultan Qaboos University.\\
\it\small \texttt{fatma9@squ.edu.om}}
\date{\today}
\maketitle\

\begin{abstract}
 Let $[n]$ be a finite chain $\{1, 2, \ldots, n\}$, and let $\mathcal{LS}_{n}$ be the semigroup consisting of all isotone and order-decreasing partial transformations on $[n]$. Moreover, let $\mathcal{SS}_{n} = \{\alpha \in \mathcal{LS}_{n} : \, 1 \in \textnormal{Dom } \alpha\}$ be the subsemigroup of $\mathcal{LS}_{n}$, consisting of all transformations in $\mathcal{LS}_{n}$ each of whose domain contains $1$. For $1 \leq p \leq n$, let
$K(n,p) = \{\alpha \in \mathcal{LS}_{n} : \, |\im \, \alpha| \leq  p\}$
and
$M(n,p) = \{\alpha \in \mathcal{SS}_{n} : \, |\im \, \alpha| \leq p\}$
be the two-sided ideals of $\mathcal{LS}_{n}$ and $\mathcal{SS}_{n}$, respectively. Furthermore, let
${RLS}_{n}(p)$ and ${RSS}_{n}(p)$ denote the Rees quotients of $K(n,p)$ and $M(n,p)$, respectively. It is shown in this article that for any
$S \in \{\mathcal{SS}_{n}, \mathcal{LS}_{n}, {RLS}_{n}(p), {RSS}_{n}(p)\}$,
$S$ is abundant and idempotent generated for all values of $n$. Moreover, the ranks  of the Rees quotients
${RLS}_{n}(p)$ and ${RSS}_{n}(p)$ are shown to be equal to the ranks  of the two-sided ideals
$K(n,p)$ and $M(n,p)$, respectively. Finally, these ranks are computed to be
$\sum\limits_{k=p}^{n} \binom{n}{k} \binom{k-1}{p-1}$ and
$\binom{n-1}{p-1}2^{n-p}$, respectively.
 \end{abstract}

\emph{2020 Mathematics Subject Classification. 20M20.}\\
\textbf{Keywords:} Isotone maps, Order decreasing, abundant semigroup, Rank properties

\section{Introduction and Preliminaries}
  For a natural number $n$, denote $[n]$ to be the finite chain $\{1,2, \ldots ,n\}$. A map $\alpha$ with its domain and range being subsets of $[n]$ (or with the domain being the entire set $[n]$  and the range being a subset of $[n]$)  is referred to as a \emph{partial} \emph{transformation} (resp.,  \emph{full transformation}). The notations $\mathcal{P}_{n}$ and $\mathcal{T}_{n}$ usually represent \emph{the semigroups of all partial and  full transformations}, respectively. A transformation $\alpha\in \mathcal{P}_{n}$ is said to be an \emph{ isotone} map (resp., an \emph{anti-tone} map) if  (for all $x,y \in \dom\,\alpha$) $x\leq y$ implies $x\alpha\leq y\alpha$ (resp., $x\alpha\geq y\alpha$); \emph{order decreasing} if (for all $x\in \dom\,\alpha$) $x\alpha\leq x$. The notations  $\mathcal{DP}_n$ and $\mathcal{OP}_n$  shall denote  \emph{the semigroup of order-decreasing partial transformations} on $[n]$ and  \emph{the semigroup of all isotone partial transformations} on $[n]$, respectively. As in \cite{auc}, we shall refer to $\mathcal{PC}_{n}$ (\emph{semigroup of all isotone order-decreasing partial transformation} on $[n]$) as the \emph{large} \emph{Schr\"{o}der} monoid and we shall denote it as: \begin{equation}\label{qn111}\mathcal{LS}_{n}= \mathcal{OP}_n\cap \mathcal{DP}_n  .\end{equation} \noindent These monoids  have been extensively studied in various contexts, see for example \cite{zua,  gu1, gm, al1, al2, al3, al4, al5}. The composition of two elements $\alpha $ and $\gamma$ in $\mathcal{P}_{n}$ is defined as $x(\alpha\circ\gamma)=((x)\alpha)\gamma$ for all $x\in\dom\, \alpha$. Without  ambiguity, we shall be using the notation $\alpha\gamma$ to denote $\alpha\circ\gamma$. We shall also use the notations $1_{[n]}$, $\im \alpha$, $\dom \alpha$, $h(\alpha)=|\im \, \alpha|$ to denote the identity map on $[n]$, the image set of a map $\alpha$, the domain set of the map $\alpha$ and the height of $\alpha$, respectively. Furthermore, let
$P$ denote a linearly  ordered partition of $[n]$ in the sense that, for any two sets  $A$ and $B$ in $P$, we write $A<B$ if each element in $A$ is less than every element in $B$.

 Now let
\begin{equation}\label{qn1}
 	\mathcal{SS}_{n} = \{\alpha \in \mathcal{LS}_{n} : 1 \in \textnormal{Dom } \alpha \}
 \end{equation}
\noindent be the set of all maps in $\mathcal{LS}_{n}$ each of whose domain contains  1 and
\begin{equation}\label{qn2}
	\mathcal{SS}^{\prime}_n = \{\alpha \in \mathcal{LS}_{n} : 1 \notin \text{Dom } \alpha\}
\end{equation}
 \noindent be the set of all maps in $\mathcal{LS}_{n}$ each of whose domain do not contains  1. In other words, $\mathcal{SS}^{\prime}_n$ is the set complement of $\mathcal{SS}_{n}$.

 The monoid  $\mathcal{LS}_{n}$ first appeared in Ganyushkin and  Mazorchuk \cite {gmv}, where it was shown that it is idempotent-generated. Moreover, the combinatorial properties of the semigroup  have been explored in \cite{al3}, where  it was shown that the size (or order) of $\mathcal{LS}_{n}$ corresponds to the \emph{large} (or \emph{double}) \emph{Schr\"{o}der number}: \[s_{0}=1, \quad s_{n}= \frac{1}{n+1} \sum\limits_{r=0}^{n}\binom{n+1}{n-r}\binom{n+r}{r} \quad (n\geq 1).\] The set $\mathcal{SS}_{n}$ and its complement $\mathcal{SS}_{n}^{\prime}$ were initially introduced by Laradji and Umar \cite{al5}, who showed that both are subsemigroups of $\mathcal{LS}_{n}$. Interestingly, these two semigroups were found to have the same size, which coincides with the (\emph{small}) \emph{Schr\"{o}der number}: \[s_{n}= \frac{1}{2(n+1)} \sum\limits_{r=0}^{n}\binom{n+1}{n-r}\binom{n+r}{r}.\]  As in \cite{al5}, we shall refer to the semigroup $\mathcal{SS}_{n}$, as the \emph{small} \emph{Schr\"{o}der} monoid.

  Moreover, for $1\le p\le n$,  let \begin{equation} \label{kn} K(n,p)=\{\alpha\in  \mathcal{LS}_{n}: \, |\im \, \alpha|\le p\}\end{equation}
  \noindent and
  \begin{equation}\label{mn} M(n,p)=\{\alpha\in  \mathcal{SS}_{n}: \, |\im \, \alpha|\le p\}\end{equation}
  \noindent be the two sided ideals of $\mathcal{LS}_{n}$  and $\mathcal{SS}_{n}$, respectively, consisting of all decreasing isotone maps with a height of no more than $p$.

  Furthermore, for $p\geq 1$,  let \begin{equation}\label{knn} {RLS}_{n}(p)= K(n,p)/ K(n, p-1)  \end{equation}
  \noindent  be the Rees quotient semigroup of $K(n,p)$, and for $p\geq 2$

  \begin{equation}\label{mnn} {RSS}_{n}(p)= M(n,p)/M(n, p-1)  \end{equation}
\noindent   be the Rees quotient semigroup of $M(n,p)$. The elements of ${RLS}_{n}(p)$ (or ${RSS}_{n}(p)$) can be considered as the elements of $\mathcal{LS}_{n}$ (or $\mathcal{SS}_{n}$) of exactly height $p$. The product of two elements of ${RLS}_{n}(p)$ (or ${RSS}_{n}(p)$) is $0$ if their product in ${RLS}_{n}(p)$ (or ${RSS}_{n}(p)$) has a height strictly less than $p$, otherwise it is in ${RLS}_{n}(p)$ (or ${RSS}_{n}(p)$).

   The algebraic and rank  properties of these subsemigroups have not been studied to our knowledge, see [\cite{al5}, Remark 4.1]. In this paper we are going to study certain algebraic and rank properties of these  semigroups. For more details about basic  terms and concepts in semigroup theory, see the books of  Howie \cite{howi} and Higgins \cite{ph}.

\indent   Following the approach outlined in \cite{HRS}, every $\alpha\in \mathcal{LS}_{n} $  can be represented as
\begin{equation}\label{1}\alpha=\begin{pmatrix}A_1&\ldots&A_p\\a_1&\ldots&a_p\end{pmatrix}   \,  (1\le p\le n),\end{equation}  where  $a_{i}\leq \min A_{i}$ for all $1\leq i\leq p$ and  $A_i$ $(1\le i\le p)$  denote equivalence classes defined by the relation $\textnormal{ker }\alpha=\{(x, y)\in \dom \, \alpha\times \dom \, \alpha: \,  x\alpha=y\alpha\}$, we shall denote this collection by $\textnormal{\bf Ker }\alpha=\{A_1, A_2, \ldots, A_p\}$.  Furthermore, $\textnormal{\bf Ker }\alpha$ is linearly ordered (i.e., for $i<j$, $A_{i}<A_{j}$ if and only if $a<b$ for all $a\in A_{i}$ and $b\in A_{j}$). Moreover, we may without loss of generality assume that $1\leq a_{1}<a_{2}<\ldots<a_{p}\leq n$, since $\alpha$ is an isotone map.

 It is important to mention that the domain of each element in $\mathcal{SS}_{n}$  contains $1$, in particular, $1\in A_{1}$, and so, each element in $\mathcal{SS}_{n}$ of height $1\leq p\leq n$ can be expressed as:
 \begin{equation} \label{eq3}
	\alpha = \begin{pmatrix}A_1&A_2&\ldots& A_p\\1&a_2&\ldots& a_p\end{pmatrix}.
\end{equation}

\section{Regularity, Green's relations and starred Green's relations}

In a semigroup $S$,  an element $a\in S$  is said to be \emph{regular} if there is $b$ in $S$ such that $a=aba$ and $S$ is said to be a \emph{regular semigroup} if every element of $S$ is regular.
When faced with a new type of transformation semigroup, the initial algebraic inquiry typically involves determining the characteristics of its Green's equivalences. These relations are commonly utilized to categorize elements within a semigroup. For definition of these relations, we recommend that the reader consults Howie \cite{howi}. In semigroup theory, there are five Green's relations, namely $\mathcal{L,R,D , J\ \text{and } H}$. It is a known fact in finite semigroups that the relations $\mathcal{D }$ and $\mathcal{J}$ are equivalent (see [\cite{howi}, Proposition 2.1.4]). Therefore, we will focus on characterizing the relations $\mathcal{L,R,D \, \text{and } H}$ on the large and small  Schr\"{o}der monoids $\mathcal{LS}_{n}  \ \text{and }  \mathcal{SS}_{n}$, respectively.

From this point forward in this section, we shall  refer to $\alpha$ and $\beta$ in $\mathcal{LS}_{n}$ as
 \begin{equation} \label{eqq3}
	\alpha = \begin{pmatrix}A_1&\ldots& A_p\\a_{1}&\ldots& a_p\end{pmatrix} \text{and} \  \beta = \begin{pmatrix} B_1 &  \ldots & B_p \\ b_{1} & \ldots  & b_p \end{pmatrix}  \, (1\leq p\leq n)
\end{equation}
\noindent and $\alpha$ and $\beta$ in $\mathcal{SS}_{n}$ as
 \begin{equation} \label{eqq4}
	\alpha = \begin{pmatrix}A_1&A_2&\ldots& A_p\\ 1&a_2&\ldots& a_p\end{pmatrix} \text{and} \  \beta = \begin{pmatrix} B_1 & B_2 & \ldots &  B_p \\ 1 & b_2& \ldots &  b_p \end{pmatrix}  \, (1\leq p\leq n).
\end{equation}

 Now let $S\in \{\mathcal{LS}_{n}, \,  \mathcal{SS}_{n} \}$. Then we have the following theorem.

 \begin{theorem}\label{l}
 Let $S\in \{\mathcal{LS}_{n}, \,  \mathcal{SS}_{n} \}$ and let $\alpha,\beta \in S $  be as in   \eqref{eqq3} or \eqref{eqq4}. Then $\alpha\mathcal{L}\beta$ if and only if
 $\im \, \alpha=\im \, \beta$ \emph{(}i.e., $a_i = b_i$ for $1\leq i\leq p$\emph{)} and $\min A_i =  \min B_i$ for all $1\leq i\leq p$.
 \end{theorem}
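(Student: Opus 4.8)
The plan is to prove both implications directly from the definition of Green's $\mathcal{L}$-relation, namely that $\alpha\mathcal{L}\beta$ precisely when there exist $\gamma,\delta\in S^{1}$ with $\gamma\alpha=\beta$ and $\delta\beta=\alpha$. Since $1_{[n]}\in S$ for both choices of $S$ (the identity map is trivially isotone, order-decreasing, and has $1$ in its domain), I may take $S^{1}=S$ and work with elements of $S$ throughout.

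For the necessity direction, suppose $\alpha\mathcal{L}\beta$ and fix $\gamma,\delta\in S$ with $\gamma\alpha=\beta$ and $\delta\beta=\alpha$. First I would record the standard consequence that $\im\beta=\im(\gamma\alpha)\subseteq\im\alpha$ and, symmetrically, $\im\alpha\subseteq\im\beta$; hence $\im\alpha=\im\beta$, and because in \eqref{eqq3}/\eqref{eqq4} the images are written in strictly increasing order with the same cardinality $p$, this forces $a_i=b_i$ for all $i$. For the minima, the key observation is that the kernel class $B_i$ of $\beta=\gamma\alpha$ is exactly $B_i=\{x\in\dom\gamma:x\gamma\in A_i\}$, since $x\beta=(x\gamma)\alpha$ equals $b_i=a_i$ iff $x\gamma$ lies in the $\alpha$-preimage $A_i$ of $a_i$. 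Evaluating at $x=\min B_i$ gives $(\min B_i)\gamma\in A_i$, so $(\min B_i)\gamma\geq\min A_i$; but $\gamma$ is order-decreasing, so $(\min B_i)\gamma\leq\min B_i$, whence $\min A_i\leq\min B_i$. Applying the same argument to $\delta$ and $\delta\beta=\alpha$ yields $\min B_i\leq\min A_i$, and therefore $\min A_i=\min B_i$.

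For the sufficiency direction, suppose $\im\alpha=\im\beta$ (so $a_i=b_i$) and $\min A_i=\min B_i$ for every $i$. I would construct the required witnesses explicitly: let $\gamma$ be the map with $\dom\gamma=\dom\beta$ that collapses each kernel class $B_i$ to the single point $\min A_i$, and symmetrically let $\delta$ collapse each $A_i$ to $\min B_i$. The heart of the verification is checking $\gamma\in S$: it is order-decreasing because for $x\in B_i$ one has $x\gamma=\min A_i=\min B_i\leq x$; it is isotone because the linear ordering of the kernel classes gives $\min A_i\leq\min A_j$ whenever $B_i<B_j$; and when $S=\mathcal{SS}_n$ its domain $\dom\beta$ contains $1$, so $\gamma\in\mathcal{SS}_n$ as well. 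A short computation then shows $x(\gamma\alpha)=(\min A_i)\alpha=a_i=b_i=x\beta$ for $x\in B_i$, with matching domains, so $\gamma\alpha=\beta$; the symmetric computation gives $\delta\beta=\alpha$, and hence $\alpha\mathcal{L}\beta$.

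The routine parts are the image-containment facts and the final compositions; the one genuinely load-bearing step — and the feature that separates this from the classical characterization of $\mathcal{L}$ on $\mathcal{T}_n$ or $\mathcal{P}_n$ — is the interaction between the order-decreasing condition $a_i\leq\min A_i$ and the minima of the kernel classes. I expect the main care to be needed in (i) seeing that the order-decreasing property forces $\min A_i=\min B_i$ in the necessity direction, and (ii) confirming that the collapsing maps $\gamma,\delta$ remain inside $S$, for which the hypothesis $\min A_i=\min B_i$ is exactly what guarantees the order-decreasing inequality $x\gamma\leq x$.
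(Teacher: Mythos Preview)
Your proof is correct and follows essentially the same approach as the paper: the sufficiency direction uses exactly the same collapsing idempotents $B_i\mapsto\min B_i=\min A_i$ (the paper's $\gamma_1,\gamma_2$), and your necessity argument---deducing $\min A_i\leq\min B_i$ from $(\min B_i)\gamma\in A_i$ together with the order-decreasing property---is the standard one the paper invokes by citing \cite{umar}, Lemma~2.2.1(2), rather than writing out. The only cosmetic difference is that you supply the forward implication in full, whereas the paper defers it to the reference.
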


 \begin{proof}
	 The proof going forward resembles the proof in  [\cite{umar}, Lemma 2.2.1(2)].
	
 Conversely, suppose that  $\im \, \alpha=\im \, \beta$ and  $\min A_i = \min B_i$ for  all  $1\leq i\leq p$.

Let $t_i = \min A_i$ and  $h_i = \min B_i$ for $1 \le i\le p$. Now if $\alpha, \beta\in \mathcal{LS}_{n}$, then define $\gamma_{1}, \gamma_{2}$ as:

\begin{equation}
	\gamma_1 = \begin{pmatrix}A_1&\ldots& A_p\\t_{1}&\ldots& t_p\end{pmatrix}  \ \text{and } \gamma_{2} = \begin{pmatrix} B_1 &  \ldots &  B_p\\ h_{1} &  \ldots &  h_p \end{pmatrix}.
\end{equation}
\noindent  If $\alpha, \beta\in \mathcal{SS}_{n}$, then we can use the definition of   $\gamma_{1}, \gamma_{2}$ as above after substituting $t_{1}=1=h_{1}$.

In both scenarios, it is evident that  $\gamma_{1}, \gamma_{2} \ \in S$ and $\alpha = \gamma_{1}\beta,\ \beta = \gamma_{2}\alpha$. Thus, ($\alpha$,$\beta$) $\in \mathcal{L}$, as required.
\end{proof}
\begin{theorem}\label{r}
 Let $S\in \{\mathcal{LS}_{n}, \mathcal{SS}_{n} \}$. Then $S$ is $\mathcal{R}-$trivial.
 \end{theorem}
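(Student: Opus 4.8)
The plan is to prove the stronger statement directly, namely that $\alpha\,\mathcal{R}\,\beta$ forces $\alpha=\beta$, by exploiting the order-decreasing hypothesis that defines both $\mathcal{LS}_{n}$ and $\mathcal{SS}_{n}$. Each $S$ under consideration is a monoid, since the identity map $1_{[n]}$ is isotone, order-decreasing, and has $1$ in its domain; hence $S^{1}=S$, and $\alpha\,\mathcal{R}\,\beta$ means there exist $\gamma,\delta\in S^{1}$ with $\alpha=\beta\gamma$ and $\beta=\alpha\delta$. I would begin the proof from exactly this pair of equations.

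The first step would be to pin down the domains. Because composition is read left to right, $\dom(\beta\gamma)=\{x\in\dom\beta:\,x\beta\in\dom\gamma\}\subseteq\dom\beta$, so $\alpha=\beta\gamma$ gives $\dom\alpha\subseteq\dom\beta$; by the symmetric argument applied to $\beta=\alpha\delta$ we obtain $\dom\beta\subseteq\dom\alpha$. Consequently $\dom\alpha=\dom\beta$, which lets the two maps be compared pointwise on a common domain.

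The second step is where the order-decreasing property does the essential work. For any $x\in\dom\alpha=\dom\beta$ we have $x\alpha=(x\beta)\gamma$, and since $\gamma$ is order-decreasing with $x\beta\in\dom\gamma$, this yields $x\alpha=(x\beta)\gamma\le x\beta$. Running the identical reasoning on $\beta=\alpha\delta$ gives $x\beta\le x\alpha$. The two inequalities together force $x\alpha=x\beta$ for every $x$ in the common domain, whence $\alpha=\beta$. If $\gamma$ or $\delta$ happens to be the adjoined identity the corresponding equality is immediate, so no separate case analysis is required, and the conclusion holds uniformly for $S\in\{\mathcal{LS}_{n},\mathcal{SS}_{n}\}$.

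The argument is short, and the only point deserving a moment's care is the inclusion $\dom(\beta\gamma)\subseteq\dom\beta$ for \emph{partial} maps, which underpins the domain-equality step. I do not anticipate any genuine obstacle: the order-decreasing hypothesis is tailor-made for this conclusion, mirroring the classical fact that semigroups of order-decreasing transformations are $\mathcal{R}$-trivial because right multiplication can only push image points downward, so no two distinct elements can generate the same right ideal.
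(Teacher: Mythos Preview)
Your argument is correct. The domain equality via $\dom(\beta\gamma)\subseteq\dom\beta$ is sound for partial maps with left-to-right composition, and the order-decreasing squeeze $x\alpha=(x\beta)\gamma\le x\beta$ together with its mirror image forces $\alpha=\beta$ pointwise. Since both $\mathcal{LS}_{n}$ and $\mathcal{SS}_{n}$ contain $1_{[n]}$, your observation that $S^{1}=S$ is justified and no adjunction issues arise.

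Your route, however, differs from the paper's. The paper does not argue directly: for $\mathcal{LS}_{n}$ it simply cites Higgins' result that order-preserving, order-decreasing (partial) transformation semigroups are $\mathcal{R}$-trivial, and for $\mathcal{SS}_{n}$ it invokes the general fact that $\mathcal{R}(\mathcal{SS}_{n})\subseteq\mathcal{R}(\mathcal{LS}_{n})\cap(\mathcal{SS}_{n}\times\mathcal{SS}_{n})$, so $\mathcal{R}$-triviality descends to the submonoid. Your approach is more elementary and fully self-contained, revealing exactly which hypothesis (order-decreasing alone, not isotone) drives the conclusion, and it handles both semigroups uniformly in one pass. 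The paper's approach is terser and situates the result in the existing literature, at the cost of relying on an external reference.
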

 \begin{proof} $\mathcal{LS}_{n}$ is  known to be $\mathcal{R}$ trivial by [\cite{ph1}, Theorem 4.2] and so $\mathcal{SS}_{n}$ is  $\mathcal{R}-$trivial follows from the fact that $\mathcal{LS}_{n}$ is   $\mathcal{R}$ trivial and   $\mathcal{R}(\mathcal{SS}_{n})\subseteq \mathcal{R}(\mathcal{LS}_{n})\cap (\mathcal{SS}_{n} \times \mathcal{SS}_{n}).$
 \end{proof}

  As a consequence of the above theorem, we readily have the following corollaries.
\begin{corollary}
On the semigroup $S\in \{\mathcal{LS}_{n}, \mathcal{SS}_{n} \}$, $\mathcal{H} = \mathcal{R}$.
\end{corollary}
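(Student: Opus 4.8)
The plan is to derive this directly from Theorem~\ref{r} together with the definition of $\mathcal{H}$ as the intersection $\mathcal{H}=\mathcal{L}\cap\mathcal{R}$. First I would record the two containments that hold in \emph{any} semigroup: since $\mathcal{H}=\mathcal{L}\cap\mathcal{R}$, we automatically have $\mathcal{H}\subseteq\mathcal{R}$; and since $\mathcal{H}$ is an equivalence relation it is reflexive, so the diagonal (identity) relation $\Delta=\{(a,a):a\in S\}$ satisfies $\Delta\subseteq\mathcal{H}$.

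Next I would invoke Theorem~\ref{r}: because $S\in\{\mathcal{LS}_{n},\mathcal{SS}_{n}\}$ is $\mathcal{R}$-trivial, every $\mathcal{R}$-class is a singleton, which is precisely the statement $\mathcal{R}=\Delta$. Chaining the containments then gives
\[
\Delta\subseteq\mathcal{H}\subseteq\mathcal{R}=\Delta,
\]
which forces equality throughout; in particular $\mathcal{H}=\mathcal{R}$, with both relations coinciding with $\Delta$.

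There is essentially no obstacle here: the statement is a purely formal consequence of $\mathcal{R}$-triviality and requires no structural information specific to the Schr\"{o}der monoids beyond Theorem~\ref{r}. The only point I would state carefully is the reading of ``$\mathcal{R}$-trivial'' as the equality $\mathcal{R}=\Delta$, since it is exactly this identification that makes the squeeze $\Delta\subseteq\mathcal{H}\subseteq\mathcal{R}$ collapse to an equality.
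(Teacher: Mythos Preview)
Your argument is correct and is essentially the paper's own approach: the paper simply records the corollary as an immediate consequence of Theorem~\ref{r} (the $\mathcal{R}$-triviality of $S$), and your proof just makes explicit the elementary squeeze $\Delta\subseteq\mathcal{H}\subseteq\mathcal{R}=\Delta$ that underlies this.
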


 \begin{corollary}\label{rem1} Let $\alpha \in S\in \{\mathcal{LS}_{n}, \mathcal{SS}_{n}\}$. Then $\alpha$ is regular if and only if $\alpha$ is an idempotent. Hence, the semigroup $S \in \{\mathcal{LS}_{n}, \mathcal{SS}_{n}\}$ is nonregular.
 \end{corollary}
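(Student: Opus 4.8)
The plan is to obtain the whole statement as an immediate consequence of $\mathcal{R}$-triviality (Theorem~\ref{r}): in any $\mathcal{R}$-trivial semigroup the regular elements coincide with the idempotents, and $\mathcal{LS}_n$ and $\mathcal{SS}_n$ are just instances of this. The easy direction needs no hypothesis at all: if $\alpha$ is an idempotent, then $\alpha\alpha\alpha = \alpha\alpha = \alpha$, so $\alpha$ is regular (with $\alpha$ itself serving as a quasi-inverse). The content is therefore entirely in the converse.

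For the converse I would argue directly with the $\mathcal{R}$-preorder rather than quote the general theory. Suppose $\alpha \in S$ is regular, say $\alpha = \alpha\beta\alpha$ for some $\beta \in S$, and put $e = \alpha\beta$. Then $e^{2} = \alpha\beta\alpha\beta = (\alpha\beta\alpha)\beta = \alpha\beta = e$, so $e$ is an idempotent, and moreover $e\alpha = \alpha\beta\alpha = \alpha$. Since $e = \alpha\beta \in \alpha S^{1}$ we get $eS^{1} \subseteq \alpha S^{1}$, and since $\alpha = e\alpha \in eS^{1}$ we get $\alpha S^{1} \subseteq eS^{1}$; hence $\alpha S^{1} = eS^{1}$, i.e. $\alpha\,\mathcal{R}\,e$. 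But $S$ is $\mathcal{R}$-trivial by Theorem~\ref{r}, so every $\mathcal{R}$-class is a singleton, forcing $\alpha = e$. Thus $\alpha = \alpha^{2}$ is idempotent, which establishes the equivalence.

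For the final clause it then suffices to produce one non-idempotent element of $S$, since by the equivalence just proved such an element is non-regular and hence $S$ is not a regular semigroup. In $\mathcal{LS}_n$ (for $n \ge 2$) the map $\alpha = \begin{pmatrix}2\\1\end{pmatrix}$ does the job: $1 \notin \dom\,\alpha$, so $\alpha^{2}$ has empty domain and $\alpha^{2} \ne \alpha$. In $\mathcal{SS}_n$ (for $n \ge 3$) one may take $\alpha = \begin{pmatrix}1 & 3\\ 1 & 2\end{pmatrix}$, whose square deletes $3$ from the domain, again giving $\alpha^{2} \ne \alpha$.

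I do not expect a genuine obstacle: the algebraic heart is packaged in Theorem~\ref{r}, and the converse is a two-line Green's-relation computation. The only point deserving a little care is the nonregularity clause, where the small values of $n$ behave exceptionally — for instance $\mathcal{SS}_2$ consists entirely of idempotents (it is in fact a semilattice), so a non-idempotent witness for the small monoid appears only from $n \ge 3$ onward, and similarly $\mathcal{LS}_1$ is regular. These degenerate cases should be acknowledged so that the blanket assertion ``$S$ is nonregular'' is understood in the intended range of $n$.
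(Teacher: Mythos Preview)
Your argument is correct and follows the same line as the paper: both deduce the equivalence directly from the $\mathcal{R}$-triviality of $S$ (Theorem~\ref{r}), the paper simply quoting the general fact that in an $\mathcal{R}$-trivial semigroup every non-idempotent is non-regular, while you unpack that fact with the standard $\alpha\,\mathcal{R}\,\alpha\beta$ computation. Your explicit non-idempotent witnesses and your remark about the degenerate cases $\mathcal{LS}_1$ and $\mathcal{SS}_2$ are a genuine improvement over the paper, which asserts nonregularity without comment on the range of $n$.
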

\begin{proof} The result follows from the fact that in an  $\mathcal{R}$-trivial semigroup, every nonidempotent element is not regular.
 \end{proof}

 \begin{theorem}
On the semigroup $S\in \{\mathcal{LS}_{n}, \mathcal{SS}_{n} \}$, $ \mathcal{D} = \mathcal{L}$.	
	\end{theorem}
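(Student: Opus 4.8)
The plan is to deduce the equality directly from $\mathcal{R}$-triviality, since the statement is essentially a formal consequence of Theorem~\ref{r}. The inclusion $\mathcal{L} \subseteq \mathcal{D}$ holds in every semigroup, so only $\mathcal{D} \subseteq \mathcal{L}$ requires argument, and I would treat both choices of $S$ simultaneously.

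First I would invoke the standard fact (see [\cite{howi}, Proposition 2.1.3]) that in any semigroup the relations $\mathcal{L}$ and $\mathcal{R}$ commute, so that their join is their composite:
\[
\mathcal{D} = \mathcal{L} \circ \mathcal{R} = \mathcal{R} \circ \mathcal{L}.
\]
Thus, given $(\alpha, \beta) \in \mathcal{D}$, there is some $\gamma \in S$ with $\alpha \, \mathcal{R} \, \gamma$ and $\gamma \, \mathcal{L} \, \beta$.

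Next I would apply Theorem~\ref{r}: since $S$ is $\mathcal{R}$-trivial, the relation $\mathcal{R}$ coincides with the equality relation on $S$. Hence $\alpha \, \mathcal{R} \, \gamma$ forces $\alpha = \gamma$, and therefore $\alpha \, \mathcal{L} \, \beta$. This gives $\mathcal{D} \subseteq \mathcal{L}$, and combined with the trivial reverse inclusion yields $\mathcal{D} = \mathcal{L}$, uniformly for both $S = \mathcal{LS}_{n}$ and $S = \mathcal{SS}_{n}$.

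There is no genuine obstacle here: the entire content is packaged into the commutativity of $\mathcal{L}$ and $\mathcal{R}$ together with the already-established $\mathcal{R}$-triviality. The only points to be careful about are to cite the commuting property correctly rather than appealing to finiteness (which would only deliver $\mathcal{D} = \mathcal{J}$, a separate though compatible fact), and to observe that, since $\mathcal{L}$ is concretely described by Theorem~\ref{l}, the resulting $\mathcal{D}$-classes are exactly the sets of maps sharing the same image and the same minima of their kernel classes.
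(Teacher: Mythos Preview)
Your argument is correct and is exactly the paper's approach: the paper simply observes that $S$ is $\mathcal{R}$-trivial (Theorem~\ref{r}) and that $\mathcal{D}=\mathcal{L}\circ\mathcal{R}$, which is precisely what you spell out in a bit more detail. There is nothing to add or change.
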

\begin{proof}
The result follows from the fact that $S$ is $\mathcal{R}$-trivial from Theorem \ref{r}, and that $\mathcal{D}=\mathcal{L}\circ \mathcal{R}.$
\end{proof}

As a consequence of the three theorems above, we deduce the following characterizations of Green's equivalences on the semigroup $S$ in $\{{RSS}_{n}(p), \, {RLS}_{n}(p), \, M(n,p), \, K(n,p) \}$.

 \begin{theorem} Let $S\in \{{RSS}_{n}(p), \, {RLS}_{n}(p), \, M(n,p), \, K(n,p) \}$ and  let $\alpha, \, \beta \in S$ be as in   \eqref{eqq3} or \eqref{eqq4}.
 Then \begin{itemize} \item[(i)] $\alpha \mathcal{L} \beta$ if and only if $\im \, \alpha = \im \, \beta$ \emph{(}i.e., $a_i = b_i$ for $1 \leq i \leq p$\emph{) }and $\min A_i = \min B_i$ for all $1 \leq i \leq p$; \item[(ii)] $S$ is $\mathcal{R}$-trivial; \item[(iii)] $\mathcal{H} = \mathcal{R}$;  \item[(iv)] $\mathcal{D} = \mathcal{L}$.\end{itemize}  Hence, for $p \geq 3$, the semigroup $S$ is nonregular.
\end{theorem}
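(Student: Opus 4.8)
The plan is to deduce all four parts from the structure already established for the ambient monoids $\mathcal{LS}_n$ and $\mathcal{SS}_n$, exploiting that $K(n,p)$ and $M(n,p)$ are two-sided ideals and that the nonzero elements of $RLS_n(p)$ and $RSS_n(p)$ are precisely the elements of height exactly $p$. The guiding principle is that Green's relations on an ideal coincide with the restriction of those on the whole semigroup, and that on the nonzero part of a Rees quotient they agree with the ambient relations provided the witnessing products do not fall into the smaller ideal.

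For (i) I would treat the two directions separately. For the forward implication, if $\alpha \mathcal{L} \beta$ holds in one of the four semigroups $S$, then it already holds in $\mathcal{LS}_n$ (or $\mathcal{SS}_n$): when $S$ is an ideal this is the standard containment $\mathcal{L}^{S}\subseteq \mathcal{L}^{\mathcal{LS}_n}\cap(S\times S)$, and when $S$ is a Rees quotient any multiplier relating two nonzero elements is either the adjoined identity or an element outside the smaller ideal, so the equalities $\alpha = s\beta$ and $\beta = t\alpha$ hold genuinely in $\mathcal{LS}_n$ without collapsing to $0$. In either case Theorem \ref{l} yields $\im\,\alpha=\im\,\beta$ and $\min A_i=\min B_i$. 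For the converse I would reuse verbatim the left multipliers $\gamma_1,\gamma_2$ built in the proof of Theorem \ref{l}. The key observation is that each of $\gamma_1,\gamma_2$ has height exactly $p$ and is order-decreasing and isotone, hence lies in $K(n,p)$ (resp. $M(n,p)$); moreover $\gamma_1\beta=\alpha$ and $\gamma_2\alpha=\beta$ are again of height $p$, so they survive as nonzero elements in the Rees quotients. Thus the same $\gamma_1,\gamma_2$ witness $\alpha\mathcal{L}\beta$ in all four semigroups simultaneously.

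For (ii) I would argue that $\mathcal{R}$-triviality passes to both constructions. Any subsemigroup of an $\mathcal{R}$-trivial semigroup is $\mathcal{R}$-trivial, which settles $K(n,p)$ and $M(n,p)$ by Theorem \ref{r}. For the Rees quotients I would show that $\mathcal{R}^{S/I}$ restricted to the nonzero elements is contained in $\mathcal{R}^{\mathcal{LS}_n}$: if two nonzero elements are $\mathcal{R}$-related in the quotient, the witnessing products are nonzero and hence are genuine equalities in $\mathcal{LS}_n$, forcing the two elements to be $\mathcal{R}^{\mathcal{LS}_n}$-related and therefore equal by Theorem \ref{r}, while $0$ forms its own class. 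Parts (iii) and (iv) are then formal consequences, exactly as in the corollaries following Theorem \ref{r}: since $\mathcal{R}$ is the identity relation, $\mathcal{H}=\mathcal{L}\cap\mathcal{R}=\mathcal{R}$ and $\mathcal{D}=\mathcal{L}\circ\mathcal{R}=\mathcal{L}$.

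For the concluding nonregularity assertion I would invoke the mechanism of Corollary \ref{rem1}: in any $\mathcal{R}$-trivial semigroup an element $\alpha$ is regular if and only if it is idempotent, since $\alpha=\alpha\beta\alpha$ makes $\alpha\beta$ an idempotent with $\alpha\,\mathcal{R}\,\alpha\beta$, whence $\alpha=\alpha\beta$. It therefore suffices to exhibit one non-idempotent element of height exactly $p$; for $p\ge 3$ one writes down an explicit such map, for instance one in which some image $a_i$ is strictly smaller than $\min A_i$, so that $\alpha^2\neq\alpha$, and its non-idempotency certifies that $S$ is nonregular. The routine ideal cases follow directly from textbook facts about Green's relations on ideals; the step that needs genuine care, and which I expect to be the main obstacle, is the Rees-quotient bookkeeping, namely verifying that the multipliers $\gamma_1,\gamma_2$ and the $\mathcal{R}$-witnessing products do not drop below height $p$ and hence remain nonzero in the quotient. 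Once that verification is in place, the transfer of the $\mathcal{L}$-characterization and of $\mathcal{R}$-triviality is immediate.
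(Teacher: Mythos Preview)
Your proposal is correct and follows the same approach as the paper, which simply states the theorem ``as a consequence of the three theorems above'' without further argument. You supply the details the paper omits---in particular the verification that, in the Rees quotients, the multipliers $\gamma_{1},\gamma_{2}$ from the proof of Theorem~\ref{l} have height exactly $p$ and the relevant products remain nonzero---so your write-up is in fact more complete than the paper's own treatment.
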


 If a semigroup is not regular, it is customary to examine the starred Green's relations in order to classify the algebraic class to which it belongs. Therefore, we will now proceed to characterize the starred analogues of Green's equivalences on these semigroups. For the definitions of these relations we recommend to the reader, Fountain \cite{FOUN2}.

There are five starred Green's equivalences, namely: $\mathcal{L}^*$, $\mathcal{R}^*$, $\mathcal{D}^*$, $\mathcal{J}^*$, and $\mathcal{H}^*$. The relation $\mathcal{D}^*$ is the join of $\mathcal{L}^*$ and $\mathcal{R}^*$, while $\mathcal{H}^*$ is the intersection of $\mathcal{L}^*$ and $\mathcal{R}^*$. A semigroup $S$ is said to be \emph{left abundant} if each $\mathcal{L}^*$-class contains an idempotent; it is said to be \emph{right abundant} if each $\mathcal{R}^*$-class contains an idempotent; and it is said to be \emph{abundant} if each $\mathcal{L}^*$-class and each $\mathcal{R}^*$-class of $S$ contains an idempotent. These classes of semigroups were introduced by Fountain \cite{FOUN, FOUN2}.

Many classes of transformation semigroups have been shown to be either left abundant, right abundant, or abundant; see for example \cite{al1, um,umar,  quasi, ua3, zm1}. Before we characterize the starred Green's relations, we need the following definition and lemmas from \cite{quasi}: A subsemigroup $U$ of $S$ is called an \emph{inverse ideal} of $S$ if for all $u \in U$, there exists $u^{\prime} \in S$ such that $uu^{\prime}u = u$ and both $u^{\prime}u$ and $uu^{\prime}$ are in $U$.

 \begin{lemma}[\cite{quasi}, Lemma 3.1.8.]\label{inv1}  Every inverse ideal $U$ of a semigroup $S$ is abundant.
 \end{lemma}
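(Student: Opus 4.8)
The plan is to prove abundance by showing that, for each $u \in U$, the $\mathcal{L}^*$-class and the $\mathcal{R}^*$-class of $u$ computed \emph{within} $U$ each contain an idempotent; since $u$ is arbitrary, every $\mathcal{L}^*$-class and every $\mathcal{R}^*$-class of $U$ will then contain an idempotent, which is exactly the definition of abundance. First I would fix $u \in U$ and use the inverse-ideal hypothesis to choose $u^{\prime} \in S$ with $uu^{\prime}u = u$ and with both $e := u^{\prime}u$ and $f := uu^{\prime}$ lying in $U$. A direct computation, using only the relation $uu^{\prime}u = u$, shows these are idempotents: $e^2 = u^{\prime}u\,u^{\prime}u = u^{\prime}(uu^{\prime}u) = u^{\prime}u = e$ and $f^2 = uu^{\prime}\,uu^{\prime} = (uu^{\prime}u)u^{\prime} = uu^{\prime} = f$. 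Thus $e$ and $f$ are idempotents of $U$.

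Next I would invoke the standard idempotent characterization of the starred relations (Fountain): for an idempotent $e$ and an element $a$, one has $a \mathrel{\mathcal{L}^*} e$ exactly when $ae = a$ and $ax = ay \Rightarrow ex = ey$ for all admissible $x,y$; dually, $a \mathrel{\mathcal{R}^*} f$ exactly when $fa = a$ and $xa = ya \Rightarrow xf = yf$. Applying this with $a = u$, I note $ue = uu^{\prime}u = u$, and if $ux = uy$ for $x,y \in U^1$ then left multiplication by $u^{\prime}$ gives $ex = u^{\prime}ux = u^{\prime}uy = ey$; hence $u \mathrel{\mathcal{L}^*} e$ in $U$. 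Symmetrically, $fu = uu^{\prime}u = u$, and $xu = yu$ yields $xf = xuu^{\prime} = yuu^{\prime} = yf$, so $u \mathrel{\mathcal{R}^*} f$ in $U$. Since $e, f \in U$, each class of $u$ contains an idempotent, and abundance follows.

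The one point that requires care—and the main thing I would verify explicitly—is that the starred relations must be computed inside $U$, that is, with the test elements $x,y$ ranging over $U^1$ rather than $S^1$. This is legitimate here because the implications above use only multiplication by such $x,y$, while the pre-inverse $u^{\prime}$ enters merely as a left multiplier turning an already-known equality of $S$ into the desired one; and since $U$ is a subsemigroup of $S$, an equality such as $ex = ey$ (with $e,x,y \in U$) holds in $U$ precisely when it holds in $S$. Beyond this bookkeeping, the argument is a routine application of the characterizing lemma, so I do not anticipate a serious obstacle.
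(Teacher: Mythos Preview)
Your argument is correct: the idempotents $e=u'u$ and $f=uu'$ lie in $U$ by hypothesis, and Fountain's idempotent characterization of $\mathcal{L}^*$ and $\mathcal{R}^*$ applies exactly as you use it, with the key observation that although $u'$ may lie only in $S$, left- or right-multiplying an equality $ux=uy$ (resp.\ $xu=yu$) by $u'$ is a computation in $S$ that produces an equality between elements already in $U$, which is all that is needed.

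Note, however, that the present paper does not supply its own proof of this lemma; it is quoted verbatim from \cite{quasi} (Lemma~3.1.8) and used as a black box. Your proof is essentially the standard one and is presumably close to the original argument in \cite{quasi}, so there is nothing to contrast.
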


 \begin{lemma} [\cite{quasi}, Lemma 3.1.9.]  \label{inv2}  Let $U$ be an inverse ideal of a semigroup $S$. Then \begin{itemize} \item[(1)]  $\mathcal{L}^{*} (U) = \mathcal{L}(S) \cap (U \times U)$; \item[(2)] $\mathcal{R}^{*}( U) = \mathcal{R}(S) \cap(U \times U)$; \item[(3)] $\mathcal{H}^{*}( U) = \mathcal{H}(S) \times (U \times U).$\end{itemize}
 \end{lemma}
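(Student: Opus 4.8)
The plan is to use the element-wise characterizations of the starred relations and to reduce everything to idempotents lying inside $U$. Recall that in a semigroup $T$ one has $a\,\mathcal{L}^{*}(T)\,b$ iff $ax=ay\Leftrightarrow bx=by$ for all $x,y\in T^{1}$, and dually $a\,\mathcal{R}^{*}(T)\,b$ iff $xa=ya\Leftrightarrow xb=yb$ for all $x,y\in T^{1}$, while $\mathcal{H}^{*}=\mathcal{L}^{*}\cap\mathcal{R}^{*}$. Since $U$ is an inverse ideal, every $u\in U$ has some $u^{\prime}\in S$ with $uu^{\prime}u=u$ and $u^{\prime}u,\,uu^{\prime}\in U$; each of these is an idempotent of $U$, and a direct check gives $u\,\mathcal{L}(S)\,u^{\prime}u$ and $u\,\mathcal{R}(S)\,uu^{\prime}$ in $S$. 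So the first thing I would record is that every element of $U$ is regular in $S$ with both one-sided idempotents already inside $U$.

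For (1) I would first show that each $a\in U$ is $\mathcal{L}^{*}(U)$-related to the idempotent $e=a^{\prime}a\in U$. Indeed $ae=a$, and for $x,y\in U^{1}$ left-multiplication by $a^{\prime}$ turns $ax=ay$ into $ex=ey$, while left-multiplication by $a$ together with $ae=a$ turns $ex=ey$ back into $ax=ay$; note that although $a^{\prime}\notin U$ in general, the resulting equalities are equalities in $S$ among elements of $U$, which is exactly what the definition of $\mathcal{L}^{*}(U)$ tests. Hence $a\,\mathcal{L}^{*}(U)\,e$ and simultaneously $a\,\mathcal{L}(S)\,e$, and likewise $b\,\mathcal{L}^{*}(U)\,f$ and $b\,\mathcal{L}(S)\,f$ with $f=b^{\prime}b$. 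The second step is the observation that on idempotents of $U$ all the relations coincide: for idempotents $e,f\in U$ one has $e\,\mathcal{L}^{*}(U)\,f\Leftrightarrow ef=e\text{ and }fe=f\Leftrightarrow e\,\mathcal{L}(S)\,f$, because $ef,fe\in U$.

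Granting these two steps, (1) follows by transitivity. If $a\,\mathcal{L}^{*}(U)\,b$, then $e\,\mathcal{L}^{*}(U)\,f$, hence $e\,\mathcal{L}(S)\,f$, and chaining $a\,\mathcal{L}(S)\,e\,\mathcal{L}(S)\,f\,\mathcal{L}(S)\,b$ gives $a\,\mathcal{L}(S)\,b$; conversely, $a\,\mathcal{L}(S)\,b$ forces $e\,\mathcal{L}(S)\,f$, hence $e\,\mathcal{L}^{*}(U)\,f$, and transitivity with $a\,\mathcal{L}^{*}(U)\,e$ and $b\,\mathcal{L}^{*}(U)\,f$ yields $a\,\mathcal{L}^{*}(U)\,b$. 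Part (2) is proved by the left--right dual argument, using the idempotent $aa^{\prime}\in U$ in place of $a^{\prime}a$. Part (3) is then immediate, since $\mathcal{H}^{*}(U)=\mathcal{L}^{*}(U)\cap\mathcal{R}^{*}(U)$ and $\mathcal{H}(S)=\mathcal{L}(S)\cap\mathcal{R}(S)$ give $\mathcal{H}^{*}(U)=\mathcal{H}(S)\cap(U\times U)$ (the product symbol in the statement being a typographical slip for intersection).

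The step I expect to demand the most care is the first one, namely verifying $a\,\mathcal{L}^{*}(U)\,e$: the cancellation conditions defining $\mathcal{L}^{*}(U)$ quantify only over $U^{1}$, yet the natural way to check them is to multiply by the inverse $a^{\prime}$, which need not lie in $U$. Keeping straight that this is legitimate---the implications are statements about equalities that hold in $S$ among elements that do belong to $U$---together with fixing the correct one-sided convention (so that $\mathcal{L}^{*}$ is matched by $a^{\prime}a$ and $\mathcal{R}^{*}$ by $aa^{\prime}$) is the delicate point; once it is in place, the remainder is routine manipulation of Green's relations.
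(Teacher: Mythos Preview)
The paper does not actually supply a proof of this lemma: it is quoted verbatim from \cite{quasi} (Lemma~3.1.9 there) and used as a black box, so there is nothing in the present paper to compare your argument against.

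That said, your argument is correct and is essentially the standard one. The two ingredients you isolate---(i) every $a\in U$ satisfies $a\,\mathcal{L}^{*}(U)\,a'a$ and $a\,\mathcal{L}(S)\,a'a$ with $a'a\in E(U)$, and (ii) for idempotents $e,f\in U$ one has $e\,\mathcal{L}^{*}(U)\,f\iff ef=e,\ fe=f\iff e\,\mathcal{L}(S)\,f$---are exactly what is needed, and the transitivity step is routine. Your worry about multiplying by $a'\notin U$ is handled correctly: the implication $ax=ay\Rightarrow ex=ey$ is obtained by a computation in $S$, but the resulting equality is between elements of $U$, which is all the definition of $\mathcal{L}^{*}(U)$ requires. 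One could also short-circuit the ``converse'' direction by invoking the general fact that $\mathcal{L}(S)\cap(U\times U)\subseteq\mathcal{L}^{*}(U)$ whenever $U\le S$, but your symmetric treatment via idempotents is perfectly fine. Part (3) is, as you note, immediate once (1) and (2) are in hand, and the $\times$ in the displayed statement is indeed a typo for $\cap$.
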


 We now have the following result.
 \begin{theorem}\label{inv} Let \(\mathcal{LS}_{n}\)  be as defined in \eqref{qn111}. Then \(\mathcal{LS}_{n}\) is an inverse ideal of  $\mathcal{P}_{n}$.
 \end{theorem}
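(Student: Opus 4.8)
The plan is to exhibit, for each $\alpha \in \mathcal{LS}_n$, an explicit partial transformation $\beta \in \mathcal{P}_n$ serving as the required inverse, and then to verify the three defining conditions of an inverse ideal. Since $\mathcal{LS}_n$ is already known to be a subsemigroup of $\mathcal{P}_n$, it remains only to produce, for a given $\alpha$, an element $\beta \in \mathcal{P}_n$ with $\alpha\beta\alpha = \alpha$ and with both $\alpha\beta$ and $\beta\alpha$ lying in $\mathcal{LS}_n$. Note that $\beta$ itself need not be isotone or order-decreasing; it is merely an element of $\mathcal{P}_n$.

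Write $\alpha$ in the standard form \eqref{1} with kernel classes $A_1 < \cdots < A_p$ and images $a_1 < \cdots < a_p$, and set $t_i = \min A_i$ for each $i$. The crucial choice is to define $\beta \in \mathcal{P}_n$ with $\dom \beta = \im \alpha = \{a_1, \ldots, a_p\}$ by $a_i \beta = t_i$. First I would compute the two one-sided products. Since $x\alpha = a_i$ for $x \in A_i$ and $a_i\beta = t_i$, the composite $\alpha\beta$ sends every element of $A_i$ to $t_i = \min A_i$; hence $\dom(\alpha\beta) = \dom\alpha$ and $\alpha\beta = \begin{pmatrix} A_1 & \cdots & A_p \\ t_1 & \cdots & t_p \end{pmatrix}$. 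On the other hand, since $t_i \in A_i$ gives $t_i\alpha = a_i$, the composite $\beta\alpha$ fixes each $a_i$, so $\beta\alpha$ is the partial identity on $\im\alpha$.

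Next I would check the three conditions from the displayed forms. The identity $\alpha\beta\alpha = \alpha$ holds because $\alpha\beta$ sends $x \in A_i$ to $t_i$ and then $\alpha$ sends $t_i$ (which lies in $A_i$) back to $a_i = x\alpha$. For $\alpha\beta \in \mathcal{LS}_n$: it is order-decreasing because $t_i = \min A_i \le x$ for every $x \in A_i$, and it is isotone because $A_1 < \cdots < A_p$ forces $t_1 < \cdots < t_p$. For $\beta\alpha \in \mathcal{LS}_n$: a partial identity is automatically isotone and order-decreasing (indeed it is an idempotent of $\mathcal{LS}_n$). This establishes all three conditions, so $\mathcal{LS}_n$ is an inverse ideal of $\mathcal{P}_n$.

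The construction is elementary, so the only real subtlety — and the step I would treat most carefully — is the choice $a_i\beta = \min A_i$. An arbitrary section of $\alpha$ (sending $a_i$ to any element of $A_i$) would still yield $\alpha\beta\alpha = \alpha$, but $\alpha\beta$ would in general fail to be order-decreasing. Selecting the minimum of each kernel class is precisely what forces $\alpha\beta$ to drop to or below the identity, while the linear ordering of the kernel classes simultaneously guarantees isotonicity; dually it makes $\beta\alpha$ a genuine partial identity. Once this choice is fixed, all verifications reduce to reading off the explicit forms of $\alpha\beta$ and $\beta\alpha$.
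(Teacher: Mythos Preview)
Your proof is correct and follows essentially the same approach as the paper: both define the auxiliary map $\beta$ (the paper calls it $\alpha'$) by sending each $a_i$ to $t_i=\min A_i$, and then verify directly that $\alpha\beta\alpha=\alpha$, that $\alpha\beta$ is the idempotent $\begin{pmatrix}A_1&\cdots&A_p\\t_1&\cdots&t_p\end{pmatrix}\in\mathcal{LS}_n$, and that $\beta\alpha=1_{\im\alpha}\in\mathcal{LS}_n$. Your additional commentary on why the choice of minima is forced is a nice touch but not present in the paper's version.
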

 \begin{proof} Let $\alpha\in \mathcal{LS}_{n}$ be as expressed in \eqref{1}, and let $t_{i}=\min A_{i}$ for all $1\leq i\leq p$. Now define $\alpha^{\prime}$ as: \[\alpha^{\prime}=\begin{pmatrix}
a_1  & \ldots & a_p\\
t_1   & \ldots & t_p
\end{pmatrix} .\]
\noindent Clearly, $\alpha^{\prime}$ is in $\mathcal{P}_{n}$. Notice that:

\begin{align*}\alpha\alpha^{\prime}\alpha &=\begin{pmatrix}
A_1  & \ldots & A_p\\
a_1   & \ldots & a_p
\end{pmatrix}\begin{pmatrix}
a_1  & \ldots & a_p\\
t_1   & \ldots & t_p
\end{pmatrix}\begin{pmatrix}
A_1  & \ldots & A_p\\
a_1   & \ldots & a_p
\end{pmatrix}\\&= \begin{pmatrix}
A_1  & \ldots & A_p\\
a_1   & \ldots & a_p
\end{pmatrix}=\alpha. \end{align*}
\noindent Moreover, \[\alpha^{\prime}\alpha=\begin{pmatrix}
a_1  & \ldots & a_p\\
t_1   & \ldots & t_p
\end{pmatrix}\begin{pmatrix}
A_1  & \ldots & A_p\\
a_1   & \ldots & a_p
\end{pmatrix}=\begin{pmatrix}
a_1  & \ldots & a_p\\
a_1   & \ldots & a_p
\end{pmatrix}=\text{1}_{\im \, \alpha}\in \mathcal{LS}_{n},\]\noindent and also \[\alpha\alpha^{\prime}=\begin{pmatrix}
A_1  & \ldots & A_p\\
a_1   & \ldots & a_p
\end{pmatrix}\begin{pmatrix}
a_1  & \ldots & a_p\\
t_1   & \ldots & t_p
\end{pmatrix}
=\begin{pmatrix}
A_1  & \ldots & A_p\\
t_1   & \ldots & t_p
\end{pmatrix}\in E(\mathcal{LS}_{n})\subset \mathcal{LS}_{n}.\] \noindent Thus,  $\mathcal{LS}_{n}$ is an inverse ideal of $\mathcal{P}_{n}$, as required.
 \end{proof}
 \begin{remark}\label{gg} By letting $a_{1}=t_{1}=1$ in the above theorem and its proof, we deduce that  $\mathcal{SS}_{n}$ is an inverse ideal of $\mathcal{P}_{n}$.
 \end{remark}

 Consequently, we have the following result.

\begin{theorem}
	Let $\mathcal{LS}_{n} \ \text{and } \mathcal{SS}_{n}$ be as defined in \eqref{qn111} and \eqref{qn1}, respectively and let $S\in \{ {\mathcal{LS}_{n}}, \mathcal{SS}_{n} \}$. Then $S$ is abundant.
\end{theorem}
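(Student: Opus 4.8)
The plan is to obtain this theorem as an immediate consequence of the inverse-ideal machinery developed just above, so that essentially no new work is required. The two structural facts already in hand are that $\mathcal{LS}_{n}$ is an inverse ideal of $\mathcal{P}_{n}$ (Theorem \ref{inv}) and that $\mathcal{SS}_{n}$ is an inverse ideal of $\mathcal{P}_{n}$ (Remark \ref{gg}). Combining either of these with Lemma \ref{inv1}, which guarantees that every inverse ideal of a semigroup is abundant, yields at once that both $\mathcal{LS}_{n}$ and $\mathcal{SS}_{n}$ are abundant. So the entire proof is a single appeal to the preceding lemma, applied separately to the two members of $\{\mathcal{LS}_{n},\mathcal{SS}_{n}\}$.

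If one prefers to see the witnessing idempotents explicitly rather than quote Lemma \ref{inv1} as a black box, the second step is to exhibit them directly. For $\alpha\in S$ written as in \eqref{1}, set $t_{i}=\min A_{i}$ and take the element $\alpha^{\prime}$ constructed in the proof of Theorem \ref{inv}. That proof already verifies $\alpha^{\prime}\alpha = 1_{\im\,\alpha}$ and $\alpha\alpha^{\prime}\in E(\mathcal{LS}_{n})$, with both lying in $\mathcal{LS}_{n}$ (and, under the substitution $a_{1}=t_{1}=1$, in $\mathcal{SS}_{n}$). By Lemma \ref{inv2}(2) we have $\alpha\,\mathcal{R}^{*}\,\alpha\alpha^{\prime}$, so the $\mathcal{R}^{*}$-class of $\alpha$ contains the idempotent $\alpha\alpha^{\prime}$; by Lemma \ref{inv2}(1) we have $\alpha\,\mathcal{L}^{*}\,\alpha^{\prime}\alpha$, so the $\mathcal{L}^{*}$-class of $\alpha$ contains the idempotent $\alpha^{\prime}\alpha$. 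Since $\alpha$ was arbitrary, every $\mathcal{L}^{*}$-class and every $\mathcal{R}^{*}$-class of $S$ meets $E(S)$, which is precisely the definition of abundance.

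Realistically there is no serious obstacle remaining at this point, because all of the genuine content has already been absorbed into establishing the inverse-ideal property in Theorem \ref{inv} and Remark \ref{gg}. The only points requiring a modicum of care are the bookkeeping involved in identifying $S$ with the correct inverse ideal in each of the two cases $S=\mathcal{LS}_{n}$ and $S=\mathcal{SS}_{n}$, and the verification that the witnessing idempotents $\alpha\alpha^{\prime}$ and $\alpha^{\prime}\alpha$ genuinely lie in $S$ and not merely in $\mathcal{P}_{n}$ — a subtlety that the proof of Theorem \ref{inv} has already recorded.
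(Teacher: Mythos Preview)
Your proposal is correct and follows exactly the paper's own approach: the paper's proof is the single sentence ``The result follows from Theorem \ref{inv} (resp., Remark \ref{gg}) and Lemma \ref{inv1},'' which is precisely your first paragraph. Your second and third paragraphs simply unpack the content of Lemma \ref{inv1} explicitly, which is additional detail the paper omits but is entirely consistent with its argument.
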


\begin{proof}
	 The result follows from Theorem \ref{inv} (resp., Remark \ref{gg}) and Lemma \ref{inv1}.
\end{proof}

 \begin{theorem} \label{a1}
 Let $S\in \{\mathcal{LS}_{n}, \mathcal{SS}_{n} \}$, then  for $\alpha, \beta\in S$ we have:
 \begin{itemize}
   \item[(i)] $\alpha\mathcal{L}^*\beta$  if and only $\im  \alpha = \im  \beta$;
   \item[(ii)] $\alpha\mathcal{R}^*\beta$ if and only if $\ker  \alpha = \ker  \beta$;
   \item[(iii)] $\alpha\mathcal{H}^*\beta$ if and only if $\alpha=\beta$;
   \item[(iv)] $\alpha\mathcal{D}^*\beta$ if and only if $|\im  \alpha| = |\im   \beta|$.
 \end{itemize}
 \end{theorem}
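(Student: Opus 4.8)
The plan is to leverage the machinery just established, namely that $\mathcal{LS}_{n}$ and $\mathcal{SS}_{n}$ are inverse ideals of $\mathcal{P}_{n}$ (Theorem~\ref{inv} and Remark~\ref{gg}), together with Lemma~\ref{inv2} which transfers the starred relations on $S$ to the ordinary Green's relations on $\mathcal{P}_{n}$. By Lemma~\ref{inv2}(1)--(3), for $\alpha,\beta\in S$ we have $\alpha\,\mathcal{L}^{*}(S)\,\beta$ if and only if $\alpha\,\mathcal{L}(\mathcal{P}_{n})\,\beta$, and $\alpha\,\mathcal{R}^{*}(S)\,\beta$ if and only if $\alpha\,\mathcal{R}(\mathcal{P}_{n})\,\beta$. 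So the whole theorem reduces to recalling the well-known descriptions of Green's relations on the partial transformation semigroup $\mathcal{P}_{n}$: two partial maps are $\mathcal{L}$-related in $\mathcal{P}_{n}$ exactly when they have the same image, and $\mathcal{R}$-related exactly when they have the same kernel (equivalently the same domain and the same kernel partition). This is the key external fact I would cite.

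For part~(i) I would argue that $\alpha\,\mathcal{L}^{*}\,\beta$ iff $\alpha\,\mathcal{L}(\mathcal{P}_{n})\,\beta$ iff $\im\,\alpha=\im\,\beta$. Note that this is strictly weaker than the ordinary $\mathcal{L}$ on $S$ described in Theorem~\ref{l}, which additionally required $\min A_i=\min B_i$; the point of passing to $\mathcal{P}_{n}$ is precisely that the ``$\min$'' condition disappears, since in $\mathcal{P}_{n}$ one has extra maps available to realise the mutual left divisibility. For part~(ii) I would similarly conclude $\alpha\,\mathcal{R}^{*}\,\beta$ iff $\alpha\,\mathcal{R}(\mathcal{P}_{n})\,\beta$ iff $\ker\,\alpha=\ker\,\beta$ (here $\ker$ encodes both the domain and the induced partition $\mathbf{Ker}$). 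Part~(iii) is then immediate since $\mathcal{H}^{*}=\mathcal{L}^{*}\cap\mathcal{R}^{*}$: equal image together with equal kernel forces $\alpha=\beta$, because an isotone order-decreasing map in the canonical form \eqref{1} is determined by its kernel classes $A_1,\ldots,A_p$ and its image values $a_1<\cdots<a_p$, and the isotone constraint pairs the $i$-th class with the $i$-th value uniquely.

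The only genuinely substantive part is~(iv), the $\mathcal{D}^{*}$ statement, and I expect this to be the main obstacle. Since $\mathcal{D}^{*}=\mathcal{L}^{*}\vee\mathcal{R}^{*}$ is a join (the smallest equivalence containing both), it need not equal $\mathcal{L}^{*}\circ\mathcal{R}^{*}$ in general, so I cannot simply compose. The clean direction is that $\alpha\,\mathcal{D}^{*}\,\beta$ implies $|\im\,\alpha|=|\im\,\beta|$, because $\mathcal{L}^{*}$ preserves the image (hence its size) and $\mathcal{R}^{*}$ preserves the kernel (hence the number of kernel classes, which equals the image size); thus the common height $p$ is a $\mathcal{D}^{*}$-invariant. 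For the converse I would show that whenever $|\im\,\alpha|=|\im\,\beta|=p$, one can connect $\alpha$ and $\beta$ by a short chain alternating $\mathcal{L}^{*}$ and $\mathcal{R}^{*}$ steps, most naturally by constructing an intermediate map $\gamma\in S$ with $\ker\,\gamma=\ker\,\alpha$ (so $\gamma\,\mathcal{R}^{*}\,\alpha$) and $\im\,\gamma=\im\,\beta$ (so $\gamma\,\mathcal{L}^{*}\,\beta$); the delicate point is verifying that such a $\gamma$ genuinely lies in $S$, i.e.\ that matching the $p$ kernel classes of $\alpha$ in increasing order with the $p$ image points of $\beta$ yields a map that is still order-decreasing (and, in the $\mathcal{SS}_{n}$ case, has $1$ in its domain). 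Since the classes and image points are each listed in increasing order and $b_i\le\min B_i\le\min A_i$ need not hold automatically, I would take the image points small enough — concretely, reuse $a_i=\min\im\,\alpha$ style values or appeal to the fact that the smallest admissible image $t_i=\min A_i$ always works — to guarantee order-decreasingness, thereby establishing the chain and completing~(iv).
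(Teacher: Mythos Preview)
Parts (i)--(iii) and the forward direction of (iv) are fine and match the paper's approach exactly: invoke Lemma~\ref{inv2} to reduce to Green's relations on $\mathcal{P}_{n}$, cite the standard description there, and for (iii) observe that an isotone map is determined by its kernel blocks together with its image set.

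The gap is in the converse of (iv). Your proposed single intermediate $\gamma$ with $\ker\gamma=\ker\alpha$ and $\im\gamma=\im\beta$ would have to be
\[
\gamma=\begin{pmatrix}A_{1}&\cdots&A_{p}\\ b_{1}&\cdots&b_{p}\end{pmatrix},
\]
and for this to lie in $S$ you need $b_{i}\le\min A_{i}$ for every $i$; but you only know $b_{i}\le\min B_{i}$, and nothing links $\min B_{i}$ to $\min A_{i}$. (Concretely, with $n=3$, $p=2$, $\alpha=\begin{pmatrix}1&2\\1&2\end{pmatrix}$ and $\beta=\begin{pmatrix}1&3\\1&3\end{pmatrix}$, your $\gamma$ would send $2\mapsto 3$, which is not decreasing.) Your fallback of replacing the image by $t_{i}=\min A_{i}$ does give a map in $S$, but then $\im\gamma\ne\im\beta$, so the $\mathcal{L}^{*}$ link to $\beta$ is lost; the same obstruction reappears if you then try to bridge $\{t_{1},\dots,t_{p}\}$ to $\beta$.

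What is missing is a \emph{canonical} image that works simultaneously for the blocks of $\alpha$ and the blocks of $\beta$. The paper takes $\im=\{1,\dots,p\}$: since $A_{1}<\cdots<A_{p}$ (resp.\ $B_{1}<\cdots<B_{p}$) are ordered subsets of $[n]$, one always has $i\le\min A_{i}$ and $i\le\min B_{i}$, so
\[
\delta=\begin{pmatrix}A_{1}&\cdots&A_{p}\\1&\cdots&p\end{pmatrix},\qquad
\gamma=\begin{pmatrix}B_{1}&\cdots&B_{p}\\1&\cdots&p\end{pmatrix}
\]
are both in $S$ (and in $\mathcal{SS}_{n}$ when $\alpha,\beta\in\mathcal{SS}_{n}$, since $1\in A_{1}$, $1\in B_{1}$). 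Then $\alpha\,\mathcal{R}^{*}\,\delta\,\mathcal{L}^{*}\,\gamma\,\mathcal{R}^{*}\,\beta$, a three-step chain rather than the two-step chain you were aiming for; indeed $\mathcal{L}^{*}\circ\mathcal{R}^{*}\ne\mathcal{R}^{*}\circ\mathcal{L}^{*}$ here, so a two-step chain cannot always be found.
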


\begin{proof}
\begin{itemize} \item[(i)] and (ii) follow from Theorem \ref{inv}, Lemma \ref{inv2} and [\cite{howi}, Exercise 2.6.17], while (iii) follows from (i) and (ii) and the fact that $\alpha$ and $\beta$ are isotone.
\item[(iv)] Let's assume that $\alpha\mathcal{D}^{*}\beta$. Thus by (\cite{howi}, Proposition 1.5.11), there exist elements $\gamma_{1},~\gamma_{2}, \ldots,~\gamma_{2n-1}\in ~S$ such that $\alpha\mathcal{L}^{*}\gamma_{1}$, $\gamma_{1}\mathcal{R}^{*}\gamma_{2}$, $\gamma_{2}\mathcal{L}^{*}\gamma_{3},\ldots,$ $\gamma_{2n-1}\mathcal{R}^{*}\beta$ for some $n\in ~ \mathbb{{N}}$. Consequently, from  (i) and (ii), we deduce that $\im~\alpha=\im~\gamma_{1}$, ${\ker}~\gamma_{1}={\ker}~\gamma_{2}$, $\im~\gamma_{2}=\im~\gamma_{3},\ldots,$ $\ker~\gamma_{2n-1}=\ker~\beta$. Now it follows that $|\im~\alpha|=|\im~\gamma_{1}|=|\dom~\gamma_{1}/ \ker~\gamma_{1}|=|\dom~\gamma_{2}/ \ker~\gamma_{2}|=\ldots=|\dom~\gamma_{2n-1}/ \ker~\gamma_{2n-1}|=|\dom~\beta/ \ker~\beta|=|\im~\beta|.$

Conversely, suppose that $|\im~\alpha|=|\im~\beta|$ where \begin{equation*}\label{2} \alpha=\left(\begin{array}{ccc}
                                                                            A_{1}  & \ldots & A_{p} \\
                                                                            a_{1} & \ldots & a_{p}
                                                                          \end{array}
\right)\text{ and } \beta=\left(\begin{array}{ccc}
                                                                            B_{1}  & \ldots & B_{p} \\
                                                                            b_{1} & \ldots & b_{p}
                                                                          \end{array}
\right).\end{equation*}

Now define \begin{equation*}\label{2} \delta=\left(\begin{array}{ccc}
                                                                            A_{1}  & \ldots & A_{p} \\
                                                                            {1} & \ldots & {p}
                                                                          \end{array}
\right)\text{ and } \gamma=\left(\begin{array}{ccc}
                                                                            B_{1}  & \ldots & B_{p} \\
                                                                            {1} & \ldots & {p}
                                                                          \end{array}
\right).\end{equation*}

\noindent Clearly, $\delta$ and $\gamma$ are in $S$. Notice that $\ker \, \alpha= \ker \, \delta$, $\im \, \delta=\im \, \gamma$ and $\ker \, \gamma=\ker \, \beta$. Thus by (i) and (ii) we see that $\alpha \mathcal{R}^{*} \delta \mathcal{L}^{*} \gamma \mathcal{R}^{*} \beta$.

 \noindent Similarly,  define $\delta=\left(\begin{array}{ccc}
                                                                            n-p+{1}  & \ldots & n \\
                                                                            a_{1} & \ldots & a_{p}
                                                                          \end{array}
\right)$ and  $\gamma=\left(\begin{array}{ccc}
                                                                            n-p+1  & \ldots & n \\
                                                                            b_{1} & \ldots & b_{p}
                                                                          \end{array}
\right)$. Clearly, $\delta$ and $\gamma\in S$. Moreover,  notice that $\im \, \alpha=\im  \, \delta$,   $\ker \, \delta= \ker \, \gamma$,  $\im \, \gamma=\im \,  \beta$. Thus by (i) and (ii) we have $\alpha \mathcal{L}^{*} \delta \mathcal{R}^{*} \gamma \mathcal{L}^{*}\beta$. Hence, by (\cite{howi}, Proposition 1.5.11) it follows that $\alpha\mathcal{D}^{*}\beta$.  The proof is now complete.
\end{itemize}
\end{proof}

\begin{lemma}\label{uaaaa} On the Schr\"{o}der monoids  $\mathcal{LS}_{n}$ and  $\mathcal{SS}_{n}$ \emph{(}$n\geq 3$\emph{)}, we have $\mathcal{D}^{*}=\mathcal{R}^{*}\circ\mathcal{L}^{*}\circ\mathcal{R}^{*}=\mathcal{L}^{*}\circ\mathcal{R}^{*}\circ\mathcal{L}^{*}$.
\end{lemma}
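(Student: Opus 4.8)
The plan is to prove each equality by double inclusion, using throughout the descriptions of $\mathcal{L}^{*}$, $\mathcal{R}^{*}$, and $\mathcal{D}^{*}$ furnished by Theorem \ref{a1}, uniformly in the height $p$ with $1\le p\le n$. The inclusions $\mathcal{R}^{*}\circ\mathcal{L}^{*}\circ\mathcal{R}^{*}\subseteq\mathcal{D}^{*}$ and $\mathcal{L}^{*}\circ\mathcal{R}^{*}\circ\mathcal{L}^{*}\subseteq\mathcal{D}^{*}$ require no computation: since $\mathcal{D}^{*}$ is the join of $\mathcal{L}^{*}$ and $\mathcal{R}^{*}$, it is an equivalence relation containing both, and by transitivity $\mathcal{D}^{*}\circ\mathcal{D}^{*}\circ\mathcal{D}^{*}\subseteq\mathcal{D}^{*}$, so the two triple composites are contained in $\mathcal{D}^{*}$.

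For the reverse inclusions I would start from a pair $\alpha\,\mathcal{D}^{*}\,\beta$, which by Theorem \ref{a1}(iv) forces $|\im\alpha|=|\im\beta|=p$ for some $p$, and then exhibit an explicit alternating chain of length three in each prescribed pattern, writing $\alpha,\beta$ as in \eqref{eqq3}/\eqref{eqq4}. For the pattern $\mathcal{R}^{*}\circ\mathcal{L}^{*}\circ\mathcal{R}^{*}$, I take the maps $\delta=\bigl(\begin{smallmatrix}A_{1}&\ldots&A_{p}\\ 1&\ldots&p\end{smallmatrix}\bigr)$ and $\gamma=\bigl(\begin{smallmatrix}B_{1}&\ldots&B_{p}\\ 1&\ldots&p\end{smallmatrix}\bigr)$ already used in the converse part of Theorem \ref{a1}(iv); they lie in $S$ because $i\le\min A_{i}$ (and $1\in A_{1}$ in the $\mathcal{SS}_{n}$ case), and from $\ker\alpha=\ker\delta$, $\im\delta=\im\gamma$, $\ker\gamma=\ker\beta$ together with Theorem \ref{a1}(i)--(ii) one reads off $\alpha\,\mathcal{R}^{*}\,\delta\,\mathcal{L}^{*}\,\gamma\,\mathcal{R}^{*}\,\beta$. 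This yields $\mathcal{D}^{*}\subseteq\mathcal{R}^{*}\circ\mathcal{L}^{*}\circ\mathcal{R}^{*}$.

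For the pattern $\mathcal{L}^{*}\circ\mathcal{R}^{*}\circ\mathcal{L}^{*}$ I would instead fix the images and pass through a common singleton kernel, seeking $\delta,\gamma$ with $\im\delta=\im\alpha$, $\im\gamma=\im\beta$ and $\ker\delta=\ker\gamma$, so that $\alpha\,\mathcal{L}^{*}\,\delta\,\mathcal{R}^{*}\,\gamma\,\mathcal{L}^{*}\,\beta$. The single genuine obstacle lies in the $\mathcal{SS}_{n}$ case: the intermediate maps must keep $1$ in their domain, so the common domain $\{n-p+1,\ldots,n\}$ that works for $\mathcal{LS}_{n}$ becomes illegitimate once $p<n$. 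I would repair this by choosing the common domain to be $\{c_{1}<\ldots<c_{p}\}$ with $c_{i}=\max(a_{i},b_{i})$. This sequence is strictly increasing (as each of $(a_{i})$ and $(b_{i})$ is), it satisfies $c_{i}\ge a_{i}$ and $c_{i}\ge b_{i}$, so that both $\delta=\bigl(\begin{smallmatrix}c_{1}&\ldots&c_{p}\\ a_{1}&\ldots&a_{p}\end{smallmatrix}\bigr)$ and $\gamma=\bigl(\begin{smallmatrix}c_{1}&\ldots&c_{p}\\ b_{1}&\ldots&b_{p}\end{smallmatrix}\bigr)$ are isotone and order-decreasing, and it has $c_{1}=\max(a_{1},b_{1})=1$ in the small-monoid case, keeping $1$ in the domain; the same choice also covers $\mathcal{LS}_{n}$.

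Checking $\im\delta=\im\alpha$, $\ker\delta=\ker\gamma$ (both kernels being trivial) and $\im\gamma=\im\beta$, and invoking Theorem \ref{a1}(i)--(ii), produces the desired chain and hence $\mathcal{D}^{*}\subseteq\mathcal{L}^{*}\circ\mathcal{R}^{*}\circ\mathcal{L}^{*}$, which closes the two equalities. I expect the domain-selection step for $\mathcal{SS}_{n}$ to be the only part demanding genuine care; everything else is routine bookkeeping with the characterizations of Theorem \ref{a1}.
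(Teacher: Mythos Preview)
Your argument is correct and follows the paper's own route: both inclusions are obtained from the explicit intermediate elements constructed in the converse half of Theorem~\ref{a1}(iv). Two points of comparison are worth recording.

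First, the paper additionally proves that $\mathcal{L}^{*}\circ\mathcal{R}^{*}\neq\mathcal{R}^{*}\circ\mathcal{L}^{*}$, via the concrete pair $\alpha=\bigl(\begin{smallmatrix}1&2\\1&2\end{smallmatrix}\bigr)$, $\beta=\bigl(\begin{smallmatrix}1&3\\1&3\end{smallmatrix}\bigr)$; this is what the authors call the ``necessity'' part, it is what motivates the hypothesis $n\ge 3$, and it shows the triple product cannot be shortened to a double one. You do not address this, though the bare equality as written does not strictly require it.

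Second, your repair of the $\mathcal{L}^{*}\circ\mathcal{R}^{*}\circ\mathcal{L}^{*}$ direction in $\mathcal{SS}_{n}$ is an actual improvement over the paper. The paper simply invokes the construction from Theorem~\ref{a1}(iv), where the common domain is $\{n-p+1,\ldots,n\}$; but for $p<n$ this set does not contain $1$, so the intermediate maps $\delta,\gamma$ do not lie in $\mathcal{SS}_{n}$. Your choice $c_{i}=\max(a_{i},b_{i})$ yields a strictly increasing sequence with $c_{1}=\max(1,1)=1$ and $c_{i}\ge a_{i},b_{i}$, so both $\delta$ and $\gamma$ are genuine elements of $\mathcal{SS}_{n}$, and the argument goes through uniformly for both monoids.
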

\begin{proof} The sufficiency  follows from the converse of the  proof of (iv) in the above theorem, while for the necessity, we have to prove that   $\mathcal{L}^{*}\circ\mathcal{R}^{*}\neq \mathcal{R}^{*}\circ\mathcal{L}^{*}$. Take \[\alpha=\left(\begin{array}{cc}
                                                                            1  &  2 \\
                                                                            {1} &2
                                                                          \end{array}
\right) \text{ and } \beta=\left(\begin{array}{cc}
                                                                            1  &  3 \\
                                                                            {1} &3
                                                                          \end{array}
\right). \]

\noindent Now define $\delta=\left(\begin{array}{cc}
                                                                            1  &  3 \\
                                                                            {1} &2
                                                                          \end{array}
\right).$ Then clearly $\im \, \alpha=\im \, \delta$ and $\dom \, \delta=\dom \, \beta$, and so  $\alpha \mathcal{L}^{*} \delta \mathcal{R}^{*}\beta$. i.e., $(\alpha, \beta)\in \mathcal{L}^{*} \circ \mathcal{R}^{*}$.

On the other hand, if we have $(\alpha, \beta)\in  \mathcal{R}^{*} \circ \mathcal{L}^{*}$, then  there must exist $\gamma \in\mathcal{SS}_{n} \subseteq \mathcal{LS}_{n}$ such that $\alpha \mathcal{R}^{*} \gamma \mathcal{L}^{*}\beta$. However, this means that $\dom \, \alpha= \dom \, \gamma=\{1,2\}$ and $\im \, \gamma=\im \, \beta=\{1,3\}$, which is impossible. The result now follows.
\end{proof}

\begin{lemma}\label{uaaa} On the semigroups  ${RLS}_{n}(p)$ and  ${RSS}_{n}(p)$, we have $\mathcal{D}^{*}=\mathcal{R}^{*}\circ\mathcal{L}^{*}\circ\mathcal{R}^{*}=\mathcal{L}^{*}\circ\mathcal{R}^{*}\circ\mathcal{L}^{*}$.
\end{lemma}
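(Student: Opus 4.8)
The plan is to mirror the structure of Lemma \ref{uaaaa}, which established the analogous identity on the full monoids $\mathcal{LS}_{n}$ and $\mathcal{SS}_{n}$, and adapt it to the Rees quotients. The statement splits into two assertions: the sufficiency $\mathcal{R}^{*}\circ\mathcal{L}^{*}\circ\mathcal{R}^{*}\subseteq\mathcal{D}^{*}$ (and likewise for the other composite) together with the reverse containment $\mathcal{D}^{*}\subseteq\mathcal{R}^{*}\circ\mathcal{L}^{*}\circ\mathcal{R}^{*}$, and the separation $\mathcal{L}^{*}\circ\mathcal{R}^{*}\neq\mathcal{R}^{*}\circ\mathcal{L}^{*}$ showing that no shorter composite of length two already equals $\mathcal{D}^{*}$. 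First I would record that in ${RLS}_{n}(p)$ and ${RSS}_{n}(p)$ every nonzero element has height exactly $p$, so for nonzero $\alpha,\beta$ the condition $|\im\,\alpha|=|\im\,\beta|$ holds automatically; hence by Theorem \ref{a1}(iv) the relation $\mathcal{D}^{*}$ collapses to "both nonzero, or both zero." The sufficiency direction is then immediate, since $\mathcal{R}^{*}\circ\mathcal{L}^{*}\circ\mathcal{R}^{*}$ and $\mathcal{L}^{*}\circ\mathcal{R}^{*}\circ\mathcal{L}^{*}$ are each contained in $\mathcal{D}^{*}$ by general theory (\cite{howi}, Proposition 1.5.11 adapted to the starred relations).

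For the reverse containment I would take two nonzero $\alpha,\beta$, each of height $p$, and exhibit the required chains explicitly. Writing $\alpha$ and $\beta$ as in \eqref{eqq3}, I would construct intermediate elements $\delta$ and $\gamma$ of height $p$ whose kernels and images are matched to $\alpha$ and $\beta$ respectively, exactly as in the converse part of the proof of Theorem \ref{a1}(iv); the point is that these intermediate maps must themselves lie in ${RLS}_{n}(p)$ (or ${RSS}_{n}(p)$), i.e.\ they must be nonzero of height precisely $p$, which is guaranteed because the constructions there produce maps with $p$ kernel classes. Using the characterizations of $\mathcal{L}^{*}$ and $\mathcal{R}^{*}$ from Theorem \ref{a1}(i)--(ii) (restricted to the quotient, where they are inherited since the quotient map is the natural $0$-adjunction on a principal factor), one gets $\alpha\,\mathcal{R}^{*}\,\delta\,\mathcal{L}^{*}\,\gamma\,\mathcal{R}^{*}\,\beta$ and symmetrically $\alpha\,\mathcal{L}^{*}\,\delta'\,\mathcal{R}^{*}\,\gamma'\,\mathcal{L}^{*}\,\beta$.

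For the separation I would reuse a small example essentially identical to the one in Lemma \ref{uaaaa}, choosing the height parameter appropriately so that the witnessing elements genuinely live in the quotient of the relevant height; the argument that $(\alpha,\beta)\in\mathcal{L}^{*}\circ\mathcal{R}^{*}$ holds while $(\alpha,\beta)\notin\mathcal{R}^{*}\circ\mathcal{L}^{*}$ proceeds by showing that an intermediate $\gamma$ forced by $\mathcal{R}^{*}\circ\mathcal{L}^{*}$ would need a domain equal to $\dom\,\alpha$ and an image equal to $\im\,\beta$ simultaneously, which the order-decreasing isotone constraint makes impossible. The main obstacle I anticipate is bookkeeping rather than conceptual: one must verify that each intermediate transformation used in the chains actually has height exactly $p$ (so that it is a nonzero element of the Rees quotient and not collapsed to $0$), and that the $\mathcal{L}^{*}$/$\mathcal{R}^{*}$ relations computed inside $\mathcal{LS}_{n}$ or $\mathcal{SS}_{n}$ descend correctly to the quotient. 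Since the Rees quotient of a principal factor preserves the nonzero $\mathcal{L}^{*}$- and $\mathcal{R}^{*}$-classes, this descent is routine, but it should be stated explicitly so the example and the general chains are both seen to live in ${RLS}_{n}(p)$ and ${RSS}_{n}(p)$.
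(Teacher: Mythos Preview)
Your proposal is correct and follows precisely the route the paper intends: the paper's own proof consists of the single line ``The proof is the same as the proof of the above lemma,'' so mirroring Lemma~\ref{uaaaa} (the constructions from the converse of Theorem~\ref{a1}(iv) for the containments, together with an analogous small example for the separation) is exactly what is called for. Your additional remarks about checking that the intermediate elements have height exactly $p$ so that they survive in the Rees quotient are more careful than the paper itself, but they do not change the approach.
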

\begin{proof} The proof is the same as  the proof of  the above lemma.
\end{proof}

As in \cite{FOUN2}, to define the relation $\mathcal{J}^{*}$ on a semigroup $S$, we first denote the $\mathcal{L}^{*}$-class
containing the element $a\in S$  by $L^{*}_{a}$. (The corresponding notation
can be used for the classes of the other relations.) A \emph{left} (resp., \emph{right}) $*$-\emph{ideal} of a
semigroup $S$ is defined to be a \emph{left} (resp., \emph{right}) ideal $I$ of $S$ such that $L^{*}_{a} \subseteq I$ (resp., $R^{*}_{a} \subseteq I$), for all $a \in  I$. A subset $I$ of $S$ is a $*$-ideal of $S$ if it is both  left  and
right $*$-ideal. The \emph{principal $*$-ideal} $J^{*}(a)$ generated by the element $a\in S$  is defined to be the intersection of all $*$-ideals of $S$ to which $a$ belongs. The relation $\mathcal{J}^{*}$ is  defined by the rule that $a \mathcal{J}^{*}  b$ if and only if $J^{*}(a) = J^{*}(b)$, where $J^{*}(a)$ is the principal $*$-ideal generated by $a$.

The next lemma is crucial to  our next investigation about the properties of $\mathcal{J}^{*}$ in the semigroup  $S\in\{\mathcal{LS}_{n}, \mathcal{SS}_{n} \}$.

\begin{lemma}[\cite{FOUN2}, Lemma 1.7]\label{jj}  Let $a$  be an element of a semigroup $S$. Then $b \in J^{*}(a)$ if and only if there are elements $a_{0},a_{1},\ldots, a_{n}\in  S$, $x_{1},\ldots,x_{n}, y_{1}, \ldots,y_{n} \in S^{1}$ such that $a = a_{0}$, $b = a_{n}$, and $(a_{i}, x_{i}a_{i-1}y_{i}) \in \mathcal{D}^{*}$ for $i = 1,\ldots,n.$
\end{lemma}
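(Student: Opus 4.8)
The plan is to prove the two inclusions $J^{*}(a) \subseteq T$ and $T \subseteq J^{*}(a)$, where $T$ denotes the set of all $b \in S$ admitting a chain $a = a_{0}, a_{1}, \ldots, a_{n} = b$ with $(a_{i}, x_{i} a_{i-1} y_{i}) \in \mathcal{D}^{*}$ as in the statement. The whole argument rests on two elementary facts: that $\mathcal{D}^{*}$ is the join of $\mathcal{L}^{*}$ and $\mathcal{R}^{*}$, so a subset closed under both $\mathcal{L}^{*}$ and $\mathcal{R}^{*}$ is automatically closed under $\mathcal{D}^{*}$; and that $\mathcal{D}^{*}$ is reflexive. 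Recall that a $*$-ideal is, by definition, a two-sided ideal $I$ closed under $\mathcal{L}^{*}$ and $\mathcal{R}^{*}$, hence a union of $\mathcal{D}^{*}$-classes, and that $J^{*}(a)$ is the intersection of all $*$-ideals containing $a$.

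First I would verify that $T$ is itself a $*$-ideal containing $a$. Membership of $a$ is immediate from the trivial (length-zero) chain, or, if one insists on $n \geq 1$, from the one-step chain $a_{1} = a$ with $x_{1} = y_{1} = 1$. For the ideal property, given $b = a_{n} \in T$ with its chain and arbitrary $s, t \in S^{1}$, I append one step by setting $a_{n+1} = sbt$, $x_{n+1} = s$, $y_{n+1} = t$; then $x_{n+1} a_{n} y_{n+1} = sbt = a_{n+1}$, and reflexivity of $\mathcal{D}^{*}$ gives $(a_{n+1}, x_{n+1} a_{n} y_{n+1}) \in \mathcal{D}^{*}$, so $sbt \in T$. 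For closure under $\mathcal{L}^{*}$ and $\mathcal{R}^{*}$ (equivalently under $\mathcal{D}^{*}$), given $b \in T$ and $c \mathcal{D}^{*} b$, I append the step $a_{n+1} = c$ with $x_{n+1} = y_{n+1} = 1$, so that $x_{n+1} a_{n} y_{n+1} = b$ and $(c, b) \in \mathcal{D}^{*}$ by hypothesis; hence $c \in T$. Thus $T$ is a $*$-ideal containing $a$, and since $J^{*}(a)$ is the smallest such, $J^{*}(a) \subseteq T$.

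For the reverse inclusion I would show that $T$ lies inside every $*$-ideal $I$ containing $a$, arguing by induction along the chain. The base case is $a_{0} = a \in I$. For the inductive step, assuming $a_{i-1} \in I$, the ideal property of $I$ (reading $x_{i}, y_{i} \in S^{1}$, where the value $1$ simply yields $x_{i} a_{i-1} y_{i} = a_{i-1} \in I$) gives $x_{i} a_{i-1} y_{i} \in I$; and since $a_{i} \mathcal{D}^{*} x_{i} a_{i-1} y_{i}$ while $I$ is a union of $\mathcal{D}^{*}$-classes, we obtain $a_{i} \in I$. Hence $b = a_{n} \in I$, and as $I$ was arbitrary among $*$-ideals containing $a$, we conclude $b \in J^{*}(a)$, i.e.\ $T \subseteq J^{*}(a)$.

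The two inclusions yield $T = J^{*}(a)$, which is exactly the assertion. The only point that genuinely requires care — and the main, though modest, obstacle — is the passage from ``closed under $\mathcal{L}^{*}$ and $\mathcal{R}^{*}$'' to ``union of $\mathcal{D}^{*}$-classes,'' which uses that $\mathcal{D}^{*} = \mathcal{L}^{*} \vee \mathcal{R}^{*}$ is the transitive closure of $\mathcal{L}^{*} \cup \mathcal{R}^{*}$; everything else is routine bookkeeping with the chain and with the harmless $S^{1}$-versus-$S$ distinction in the ideal condition.
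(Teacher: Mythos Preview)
Your proof is correct. Note, however, that the paper does not supply its own proof of this lemma: it is quoted directly from Fountain \cite{FOUN2} (Lemma~1.7) and invoked as a black box. Your two-inclusion argument---showing that the set $T$ of elements reachable by such $\mathcal{D}^{*}$-chains is itself a $*$-ideal containing $a$, and conversely that every $*$-ideal containing $a$ absorbs each step of a chain via the ideal property followed by $\mathcal{D}^{*}$-saturation---is precisely the standard proof, and is essentially Fountain's original argument.
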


As in \cite{ua}, we now have the following:

\begin{lemma}\label{jjj} For $\alpha, \, \beta\in S\in\{\mathcal{LS}_{n}, \mathcal{SS}_{n} \}$, let $ \alpha\in J^{*}(\beta)$. Then $\mid \im \, \alpha \mid\leq \mid \im \,\beta \mid$.
\end{lemma}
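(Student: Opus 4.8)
The plan is to use Lemma \ref{jj} together with the characterization of $\mathcal{D}^{*}$ from Theorem \ref{a1}(iv), which tells us that $\gamma\mathcal{D}^{*}\delta$ holds if and only if $|\im\,\gamma|=|\im\,\delta|$. Since $\alpha\in J^{*}(\beta)$, Lemma \ref{jj} provides elements $a_{0},a_{1},\ldots,a_{n}\in S$ and $x_{1},\ldots,x_{n},y_{1},\ldots,y_{n}\in S^{1}$ with $\beta=a_{0}$, $\alpha=a_{n}$, and $(a_{i},x_{i}a_{i-1}y_{i})\in\mathcal{D}^{*}$ for each $i=1,\ldots,n$. Applying Theorem \ref{a1}(iv) to each such pair immediately gives $|\im\,a_{i}|=|\im\,(x_{i}a_{i-1}y_{i})|$ for every $i$.

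The key observation to exploit is that in a transformation semigroup, multiplying on either side can only decrease (or preserve) the height of a map: for any $\sigma\in S$ and any $x,y\in S^{1}$ one has $|\im\,(x\sigma y)|\leq|\im\,\sigma|$. First I would establish this monotonicity. Composing $\sigma$ with $y$ on the right can only merge or drop image points, so $|\im\,(\sigma y)|\leq|\im\,\sigma|$; and composing with $x$ on the left restricts the domain, so $|\im\,(x\sigma)|\leq|\im\,\sigma|$. (When $x$ or $y$ equals the adjoined identity $1$ the inequality is trivial.) Combining these two facts yields $|\im\,(x_{i}a_{i-1}y_{i})|\leq|\im\,a_{i-1}|$ for each $i$.

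Chaining the two displayed relations then finishes the argument. For each $i$ we have
\begin{equation*}
|\im\,a_{i}|=|\im\,(x_{i}a_{i-1}y_{i})|\leq|\im\,a_{i-1}|,
\end{equation*}
so the sequence $|\im\,a_{0}|,|\im\,a_{1}|,\ldots,|\im\,a_{n}|$ is nonincreasing. In particular $|\im\,a_{n}|\leq|\im\,a_{0}|$, which is precisely $|\im\,\alpha|\leq|\im\,\beta|$, as required.

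I do not expect any serious obstacle here: the result is essentially a bookkeeping consequence of Lemma \ref{jj} and the height characterization of $\mathcal{D}^{*}$. The only point deserving mild care is the height-monotonicity of products, which should be stated cleanly (and the edge cases $x_{i}=1$ or $y_{i}=1$ acknowledged) since it is the sole genuinely computational input; everything else is a direct substitution into the $\mathcal{D}^{*}$ characterization followed by telescoping the inequalities along the chain.
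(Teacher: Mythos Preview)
Your proposal is correct and follows essentially the same argument as the paper: both invoke Lemma \ref{jj} to obtain a chain $\beta=a_{0},a_{1},\ldots,a_{n}=\alpha$ with $(a_{i},x_{i}a_{i-1}y_{i})\in\mathcal{D}^{*}$, apply the $\mathcal{D}^{*}$ characterization to get $|\im\,a_{i}|=|\im\,(x_{i}a_{i-1}y_{i})|$, and then use the height-monotonicity of products to telescope the inequalities. The only cosmetic differences are that you cite Theorem \ref{a1}(iv) where the paper cites Lemma \ref{uaaaa}, and you spell out the monotonicity step more carefully than the paper does.
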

\begin{proof} Let $ \alpha \in J^{*}(\beta)$. Then, by Lemma \ref{jj}, there exist $\beta_{0}, \beta_{1},\ldots, \beta_{n}$, $\gamma_{1}, \ldots, \gamma_{n}$, $\tau_{1}, \ldots, \tau_{n}$ in $S\in\{\mathcal{LS}_{n}, \mathcal{SS}_{n} \}$ such that $\beta=\beta_{0}$,  $\alpha=\beta_{n}$, and $(\beta_{i}, \gamma_{i}\beta_{i-1}\tau_{i})\in \mathcal{D}^{*}$ for $i =1,\ldots,n.$ Thus, by Lemma \ref{uaaaa}, this implies that
\[\mid\im \,\beta_{i} \mid= \mid\im \, \gamma_{i}\beta_{i-1}\tau_{i} \mid\leq \mid\im \, \beta_{i-1} \mid ,\] \noindent so that
\[\mid \im \, \alpha \mid\leq \mid \im \,\beta \mid,\] \noindent as required.
\end{proof}

\begin{lemma}\label{uaaaaa} On the large and small Schr\"{o}der monoids  $\mathcal{LS}_{n}$ and  $\mathcal{SS}_{n}$, we have $\mathcal{J}^{*}=\mathcal{D}^{*}$.
\end{lemma}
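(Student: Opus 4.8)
The plan is to establish the two inclusions $\mathcal{D}^{*}\subseteq\mathcal{J}^{*}$ and $\mathcal{J}^{*}\subseteq\mathcal{D}^{*}$ separately, since proving equality of these relations amounts to showing each is contained in the other. The first inclusion is the general semigroup fact $\mathcal{D}^{*}\subseteq\mathcal{J}^{*}$, which holds in any semigroup: if $\alpha\mathcal{D}^{*}\beta$, then in particular $\beta\in J^{*}(\alpha)$ and $\alpha\in J^{*}(\beta)$ directly from the definition via Lemma \ref{jj} (taking the trivial chain $a_{0}=a_{1}$ with $x_{1}=y_{1}=1$), so $J^{*}(\alpha)=J^{*}(\beta)$ and hence $\alpha\mathcal{J}^{*}\beta$. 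This direction requires essentially no work specific to the Schr\"{o}der monoids.

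The substantive direction is $\mathcal{J}^{*}\subseteq\mathcal{D}^{*}$. Here I would suppose $\alpha\mathcal{J}^{*}\beta$, so that $\alpha\in J^{*}(\beta)$ and $\beta\in J^{*}(\alpha)$. Applying Lemma \ref{jjj} to the first membership gives $|\im\,\alpha|\leq|\im\,\beta|$, and applying it to the second gives $|\im\,\beta|\leq|\im\,\alpha|$; combining these yields $|\im\,\alpha|=|\im\,\beta|$. By Theorem \ref{a1}(iv), the equality of heights is exactly the condition characterizing $\mathcal{D}^{*}$ on these semigroups, so $\alpha\mathcal{D}^{*}\beta$ follows immediately.

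The key structural ingredients are therefore Lemma \ref{jjj}, which converts $\mathcal{J}^{*}$-membership into a height inequality, and Theorem \ref{a1}(iv), which pins $\mathcal{D}^{*}$ down to equality of heights. The interplay is that $\mathcal{J}^{*}$ only controls height up to inequality, but applying the inequality in both directions forces equality, at which point $\mathcal{D}^{*}$ takes over. I expect no genuine obstacle here: the hard analytic content was already absorbed into Lemma \ref{jjj} (whose proof relied on Lemma \ref{uaaaa} to handle the $\mathcal{D}^{*}=\mathcal{R}^{*}\circ\mathcal{L}^{*}\circ\mathcal{R}^{*}$ decomposition) and into the explicit constructions in Theorem \ref{a1}(iv). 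The only point demanding care is to verify that the symmetry of $\mathcal{J}^{*}$ genuinely gives both memberships $\alpha\in J^{*}(\beta)$ and $\beta\in J^{*}(\alpha)$, which is immediate since $J^{*}(\alpha)=J^{*}(\beta)$ means each element lies in the principal $*$-ideal generated by the other.
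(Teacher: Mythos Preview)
Your proposal is correct and follows essentially the same route as the paper: invoke the general inclusion $\mathcal{D}^{*}\subseteq\mathcal{J}^{*}$, then for the reverse use Lemma~\ref{jjj} in both directions to force $|\im\,\alpha|=|\im\,\beta|$, and conclude via the height characterization of $\mathcal{D}^{*}$. The only cosmetic difference is that you cite Theorem~\ref{a1}(iv) for the final step whereas the paper cites Lemma~\ref{uaaaa}; your reference is in fact the more direct one.
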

\begin{proof} Notice we need to only show that $\mathcal{J}^{*} \subseteq \mathcal{D}^{*}$ (since $\mathcal{D}^{*} \subseteq \mathcal{J}^{*}$). So, suppose that $(\alpha,\beta) \in \mathcal{J}^{*}$, then $J^{*}(\alpha)=J^{*}(\beta)$, so that $\alpha\in J^{*}(\beta)$ and $\beta\in J^{*}(\alpha)$. However, by Lemma \ref{jjj}, this implies that \[\mid \im \, \alpha \mid \leq \mid \im \, \beta \mid \text{ and } \mid \im \, \beta \mid \leq \mid \im \, \alpha \mid,\] \noindent so that $\mid \im \, \alpha \mid= \mid \im \, \beta \mid$. Thus by Lemma \ref{uaaaa}, we have \[\mathcal{J}^{*} \subseteq \mathcal{D}^{*},\]\noindent as required.

\end{proof}

\begin{lemma}\label{un} On the semigroup $S$ in $\{\mathcal{LS}_{n}, \, \mathcal{SS}_{n}, \, {RSS}_{n}(p), \, {RLS}_{n}(p),  \, M(n,p), \,  K(n,p) \}$, every $\mathcal{R}^{*}-$class  contains a unique idempotent.
\end{lemma}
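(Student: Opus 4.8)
The plan is to show that in each of these semigroups, the idempotents are precisely determined by their kernel, and that every element shares its $\mathcal{R}^{*}$-class with exactly one idempotent. First I would recall from Theorem \ref{a1}(ii) that $\alpha\mathcal{R}^{*}\beta$ if and only if $\ker\,\alpha=\ker\,\beta$, so that each $\mathcal{R}^{*}$-class corresponds to a fixed partition $\textnormal{\bf Ker}\,\alpha=\{A_{1},\ldots,A_{p}\}$ of the domain. The heart of the matter is therefore to exhibit, for a given kernel, a \emph{unique} idempotent carrying that kernel.

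The key observation is that an element $\varepsilon\in\mathcal{LS}_{n}$ written as in \eqref{1} is idempotent precisely when each $a_{i}\in A_{i}$ (so that $a_{i}\varepsilon=a_{i}$); combined with the standing constraint $a_{i}\leq\min A_{i}$ forced by order-decreasingness, this pins down $a_{i}=\min A_{i}$ for every $i$. Thus I would argue that, given the kernel classes $A_{1}<\cdots<A_{p}$, the \emph{only} candidate idempotent is
\begin{equation*}
\varepsilon=\begin{pmatrix}A_{1}&\ldots&A_{p}\\ \min A_{1}&\ldots&\min A_{p}\end{pmatrix},
\end{equation*}
which is manifestly isotone, order-decreasing, and satisfies $\varepsilon^{2}=\varepsilon$, hence lies in $\mathcal{LS}_{n}$ and is an idempotent in the prescribed $\mathcal{R}^{*}$-class. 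Uniqueness then follows because any other idempotent with the same kernel would have to fix some point of each $A_{i}$ while remaining order-decreasing, forcing it to agree with $\varepsilon$. For $\mathcal{SS}_{n}$ one notes $1\in A_{1}$, whence $\min A_{1}=1$ and $\varepsilon$ automatically lies in $\mathcal{SS}_{n}$; the same $\varepsilon$ serves in the ideals $M(n,p)$ and $K(n,p)$ since $h(\varepsilon)=p$ is unchanged, and in the Rees quotients $\varepsilon$ is a nonzero idempotent of height exactly $p$.

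For the Rees quotient semigroups ${RLS}_{n}(p)$ and ${RSS}_{n}(p)$ I would add the remark that their nonzero elements are exactly the height-$p$ maps, and that the starred relations restrict sensibly so that the adjoined zero forms its own (degenerate) $\mathcal{R}^{*}$-class while every nonzero $\mathcal{R}^{*}$-class still corresponds to a kernel of $p$ classes, again containing the single idempotent $\varepsilon$ displayed above. The main obstacle I anticipate is the \emph{uniqueness} half rather than existence: I must rule out a second idempotent sharing the kernel, and for this the decisive ingredient is that order-decreasingness forces each image value down to $\min A_{i}$ while idempotency forces it to be a fixed point inside $A_{i}$, and these two constraints are compatible only at $\min A_{i}$. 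Once this pinning argument is made precise for $\mathcal{LS}_{n}$, the remaining cases follow by the inclusions and the fact that $\varepsilon$ preserves height and membership of $1$ in the domain.
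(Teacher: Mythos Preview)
Your proposal is correct and follows essentially the same approach as the paper: the paper's proof is a single sentence stating that $\textnormal{\bf Ker}\,\alpha$ admits only one image subset making $\alpha$ idempotent because of the decreasing property, which is precisely your pinning argument that idempotency forces $a_{i}\in A_{i}$ while order-decreasingness forces $a_{i}\leq\min A_{i}$, hence $a_{i}=\min A_{i}$. Your write-up is considerably more detailed than the paper's---in particular your explicit treatment of $\mathcal{SS}_{n}$, the ideals, and the Rees quotients (including the zero class) goes beyond what the paper spells out---but the underlying idea is identical.
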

\begin{proof} This follows from the fact that  \textbf{Ker }$\alpha$ can only admit one image subset of $[n]$ so that $\alpha$ is an idempotent by the decreasing property of $\alpha$.

\end{proof}

\begin{remark}\begin{itemize}
             \item[(i)]  It is now clear that, for each $1\le p \le n$, the number of $\mathcal{R}^{*}-$classes in $J^{*}_{p}=\{\alpha\in \mathcal{LS}_{n}: \, |\im \, \alpha|=p\}$ is equal to the number of all possible partial ordered partitions of $[n]$ into $p$ parts. This  is equivalent to the number of  $\mathcal{R}-$classes  in  $ \{\alpha\in \mathcal{OP}_n: \, |\im \, \alpha|=p\}$, which is known to be $\sum\limits_{r=p}^{n}{\binom{n}{r}}{\binom{r-1}{p-1}}$ from \emph{ [\cite{al3}, Lemma 4.1]}.
             \item[(ii)] If $S\in \{{RSS}_{n}(p), \, {RLS}_{n}(p),  \, M(n,p), \,  K(n,p) \}$. Then the characterizations of the starred Green's relations in Theorem \ref{a1}, also hold in $S$.
           \end{itemize}
\end{remark}

Thus, the semigroup $K(n,p)$, like $\mathcal{LS}_{n}$ is the union of $\mathcal{J}^{*}$ classes \[ J_{o}^{*}, \, J_{1}^{*}, \, \ldots, \, J_{p}^{*}\]
where \[J_{p}^{*}=\{\alpha\in K(n,p): \, |\im \, \alpha|=p\}.\]

Furthermore, $K(n,p)$ has $\sum\limits_{r=p}^{n}{\binom{n}{r}}{\binom{r-1}{p-1}}$ $\mathcal{R}^{*}-$classes and $\binom{n}{p}$ $\mathcal{L}^{*}-$classes in each $J^{*}_{p}$. Consequently, the Rees quotient semigroup ${RLS}_{n}(p)$ has $\sum\limits_{r=p}^{n}{\binom{n}{r}}{\binom{r-1}{p-1}}+1$ $\mathcal{R}^{*}-$classes and $\binom{n}{p}+1$ $\mathcal{L}^{*}-$classes. (The term 1 is derived from the singleton class containing the zero element in every instance.)

 Now, let $J^{*}_{p}=\{\alpha\in \mathcal{SS}_{n}: \, h(\alpha)=p\}$. We compute the number of $\mathcal{R}^{*}$ classes in $J^{*}_{p}$ and the number of idempotents in $\mathcal{SS}_{n}$ in the lemmas below.

 \begin{lemma}  For $1\leq p\leq n$, the number of $\mathcal{R}^{*}-$classes in $J^{*}_{p}$ is \[\sum\limits_{r=p}^{n}{\binom{n-1}{r-1}}{\binom{r-1}{p-1}}.\]
 \end{lemma}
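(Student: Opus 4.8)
The plan is to reduce the count of $\mathcal{R}^{*}$-classes to a count of kernels, using the characterization of $\mathcal{R}^{*}$ established in Theorem \ref{a1}(ii): namely, $\alpha \mathcal{R}^{*} \beta$ if and only if $\ker \alpha = \ker \beta$. Consequently, the number of $\mathcal{R}^{*}$-classes in $J^{*}_{p}$ equals the number of distinct kernels $\textnormal{\bf Ker } \alpha = \{A_{1}, \ldots, A_{p}\}$ arising from maps $\alpha \in \mathcal{SS}_{n}$ of height $p$. Each such kernel is a linearly ordered partition of the set $D = \dom \alpha$ into $p$ blocks, subject only to the constraint that $1 \in D$; since $A_{1}$ is the least block, this forces $1 \in A_{1}$ and hence $\min A_{1} = 1$.

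First I would verify the realizability direction: every linearly ordered partition of a subset $D \ni 1$ of $[n]$ into $p$ blocks is actually the kernel of some element of $\mathcal{SS}_{n}$, so that the desired count is exactly the number of such partitions. Given blocks $A_{1} < \cdots < A_{p}$, set $a_{i} = \min A_{i}$ for each $i$; because the blocks are linearly ordered we have $a_{1} < a_{2} < \cdots < a_{p}$, and because $1 \in A_{1}$ we get $a_{1} = 1$. The resulting map is isotone, order-decreasing, and has $1$ in its domain, hence lies in $\mathcal{SS}_{n}$ (indeed it is precisely the unique idempotent of its $\mathcal{R}^{*}$-class furnished by Lemma \ref{un}). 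Thus the kernels in question are in bijection with the pairs consisting of a domain $D$ with $1 \in D$ together with a linearly ordered partition of $D$ into $p$ blocks.

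The counting then proceeds by summing over the size $r = |D|$ of the domain, where $p \leq r \leq n$. Because $1$ must lie in $D$, the remaining $r-1$ elements are chosen from $\{2, \ldots, n\}$, giving $\binom{n-1}{r-1}$ choices of domain. Having fixed an ordered chain of $r$ elements, a linearly ordered partition into $p$ blocks amounts to selecting $p-1$ of the $r-1$ \emph{gaps} between consecutive elements, giving $\binom{r-1}{p-1}$ partitions. Multiplying these and summing over $r$ yields
\[
\sum_{r=p}^{n} \binom{n-1}{r-1} \binom{r-1}{p-1},
\]
as required. This mirrors the computation recorded for $\mathcal{LS}_{n}$ in the preceding Remark, the sole difference being that the constraint $1 \in D$ replaces the factor $\binom{n}{r}$ by $\binom{n-1}{r-1}$.

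I expect the only delicate point to be the realizability claim, that is, confirming that each candidate linearly ordered partition genuinely occurs as a kernel; but this is immediate from the block-minimum construction above, which simultaneously exhibits the distinguished idempotent in each class. The subsequent binomial bookkeeping is entirely routine.
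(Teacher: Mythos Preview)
Your proof is correct and follows essentially the same approach as the paper: both reduce the count of $\mathcal{R}^{*}$-classes to a count of ordered partitions of domains containing $1$, then sum over the domain size $r$, obtaining $\binom{n-1}{r-1}$ choices for the domain and $\binom{r-1}{p-1}$ for the partition. Your explicit verification of realizability via the block-minimum idempotent is a welcome clarification, but it does not depart from the paper's route.
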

 \begin{proof} Let $\alpha\in \mathcal{SS}_{n}$ be such that $h(\alpha)=p$ and $|\dom \, \alpha|=r$ for $p\leq r\leq n$. Next observe that since $1\in \dom \, \alpha$, then we can choose the remaining $r-1$ elements of $\dom \, \alpha$ from $[n]\setminus \{1\}$ in $\binom{n-1}{r-1}$ ways. Moreover, we can partition $\dom \, \alpha$ into $p$ convex (modulo $\dom \, \alpha$) subsets in $\binom{r-1}{p-1}$ ways. The result follows after multiplying these two binomial coefficients and taking the sum from $r=p$ to $r=n$.
 \end{proof}
  \begin{lemma}\label{ssch} For  $1\le p \le n$, we have $\sum\limits_{r=p}^{n}{\binom{n-1}{r-1}}{\binom{r-1}{p-1}}=\binom{n-1}{p-1}2^{n-p}$.
  \end{lemma}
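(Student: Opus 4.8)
The plan is to reduce the stated identity to the classical ``subset-of-a-subset'' (trinomial revision) identity and then collapse the resulting sum by the binomial theorem. First I would record the algebraic fact that, for the terms appearing on the left-hand side,
\[
\binom{n-1}{r-1}\binom{r-1}{p-1}=\binom{n-1}{p-1}\binom{n-p}{r-p},
\]
which holds for all $p\le r\le n$. This is the instance $\binom{m}{k}\binom{k}{j}=\binom{m}{j}\binom{m-j}{k-j}$ of trinomial revision, obtained by setting $m=n-1$, $k=r-1$, and $j=p-1$; combinatorially it says that selecting an $(r-1)$-subset of an $(n-1)$-set and then a distinguished $(p-1)$-subset inside it is the same as fixing the $(p-1)$-subset first and then adjoining the remaining $r-p$ elements from the $n-p$ unused ones.

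With this rewriting in hand, the factor $\binom{n-1}{p-1}$ no longer depends on the summation index $r$ and pulls out of the sum, leaving
\[
\sum_{r=p}^{n}\binom{n-1}{r-1}\binom{r-1}{p-1}=\binom{n-1}{p-1}\sum_{r=p}^{n}\binom{n-p}{r-p}.
\]
I would then reindex the remaining sum by $s=r-p$, so that $s$ runs from $0$ to $n-p$, turning it into $\sum_{s=0}^{n-p}\binom{n-p}{s}$, which equals $2^{n-p}$ by the binomial theorem. Multiplying back the extracted factor yields $\binom{n-1}{p-1}2^{n-p}$, exactly the claimed right-hand side.

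There is no substantial obstacle here: the only point requiring care is the verification of the trinomial revision identity and the bookkeeping of index shifts (from $r$ to $s$, and from the pair $(n-1,p-1)$ to $(n-p,r-p)$), both of which are routine. If one prefers a self-contained argument avoiding an appeal to a named identity, the same proof can be phrased purely combinatorially: both sides count the number of ways to choose a $(p-1)$-element ``marked'' subset together with an arbitrary further subset of the complement within $[n]\setminus\{1\}$, the left side organizing the count by the total size $r-1$ of the chosen region and the right side splitting off the marked part first. Either phrasing gives the equality in a few lines.
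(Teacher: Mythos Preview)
Your proof is correct and follows essentially the same route as the paper: the paper carries out the trinomial revision $\binom{n-1}{r-1}\binom{r-1}{p-1}=\binom{n-1}{p-1}\binom{n-p}{n-r}$ by explicit factorial manipulation (multiplying and dividing by $(n-p)!$), then factors out $\binom{n-1}{p-1}$ and sums the remaining binomial coefficients to $2^{n-p}$, exactly as you do. The only cosmetic difference is that you cite the identity by name and write $\binom{n-p}{r-p}$ where the paper writes the symmetric form $\binom{n-p}{n-r}$.
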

  \begin{proof} \begin{align*} \sum\limits_{r=p}^{n}{\binom{n-1}{r-1}}{\binom{r-1}{p-1}}=& \sum\limits_{r=p}^{n}{\frac{(n-1)!}{(n-r)!(r-1)!}\cdot\frac{(r-1)!}{(r-p)!(p-1)!}}\\&= \sum\limits_{r=p}^{n}{\frac{(n-1)!}{(n-r)!(r-p)!(p-1)!}}\\&= \sum\limits_{r=p}^{n}{\frac{(n-1)!(n-p)!}{(n-r)!(p-1)!(r-p)!(n-p)!}} \, \, \left(\textnormal{multiplying by   $\frac{(n-p)!}{(n-p)!}$}\right)\\&=\sum\limits_{r=p}^{n}{\frac{(n-1)!}{(p-1)!(n-p)!}\cdot\frac{(n-p)!}{(n-r)!(r-p)!}} \textnormal{ (by spliting and rearranging the fractions)}\\&
  = \sum\limits_{r=p}^{n}{\binom{n-1}{p-1}\binom{n-p}{n-r}}\\&
  = \binom{n-1}{p-1}\sum\limits_{r=p}^{n}{\binom{n-p}{n-r}}\\&= \binom{n-1}{p-1}2^{n-p},
  \end{align*}
  as required.
  \end{proof}
  Now we have the theorem below.
  \begin{theorem} Let $\mathcal{SS}_{n}$ be as defined in \eqref{qn1}. Then  $|E(\mathcal{SS}_{n})|=3^{n-1}$.
  \end{theorem}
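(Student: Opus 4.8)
The plan is to identify the idempotents of $\mathcal{SS}_{n}$ explicitly and then enumerate them, leaning on the structural results already assembled. First I would characterize them. Writing $\alpha$ as in \eqref{eq3}, idempotency $\alpha^{2}=\alpha$ forces every image point to be a fixed point, so $a_{i}\alpha = a_{i}$; as $a_{i}$ then lies in the kernel block that is sent to $a_{i}$, we get $a_{i}\in A_{i}$. Combined with the defining inequality $a_{i}\le \min A_{i}$ of $\mathcal{LS}_{n}$ and the order-decreasing property, this forces $a_{i}=\min A_{i}$ for every $i$. Hence an idempotent of $\mathcal{SS}_{n}$ is precisely a map each of whose kernel blocks is sent to its own minimum (with $1\in A_{1}$ forcing $a_{1}=1$), and conversely every such map is a well-defined isotone order-decreasing idempotent, since the minima $\min A_{1}<\cdots<\min A_{p}$ increase with the blocks. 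Thus idempotents of $\mathcal{SS}_{n}$ correspond bijectively to pairs consisting of a domain $D\subseteq[n]$ with $1\in D$ together with a partition of $D$ into convex (modulo $D$) blocks.

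With this description the count is a short binomial computation. If $|\dom\,\alpha|=r$ with $1\le r\le n$, then since $1\in\dom\,\alpha$ the remaining $r-1$ domain elements are chosen from $[n]\setminus\{1\}$ in $\binom{n-1}{r-1}$ ways, and $D$ admits exactly $2^{r-1}$ partitions into convex blocks (one binary choice at each of the $r-1$ internal gaps). Summing over $r$ gives
\[
|E(\mathcal{SS}_{n})| = \sum_{r=1}^{n}\binom{n-1}{r-1}2^{r-1} = \sum_{k=0}^{n-1}\binom{n-1}{k}2^{k} = (1+2)^{n-1} = 3^{n-1},
\]
the penultimate equality being the binomial theorem after the substitution $k=r-1$.

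Alternatively --- and this is the route most natural given the preceding lemmas --- I would invoke Lemma \ref{un}, which guarantees that every $\mathcal{R}^{*}$-class of $\mathcal{SS}_{n}$ contains a unique idempotent. Consequently $|E(\mathcal{SS}_{n})|$ equals the total number of $\mathcal{R}^{*}$-classes, summed over the $J^{*}_{p}$. Using the closed form $\binom{n-1}{p-1}2^{n-p}$ for the number of $\mathcal{R}^{*}$-classes in $J^{*}_{p}$ supplied by Lemma \ref{ssch}, this yields
\[
|E(\mathcal{SS}_{n})| = \sum_{p=1}^{n}\binom{n-1}{p-1}2^{n-p} = \sum_{k=0}^{n-1}\binom{n-1}{k}2^{k} = 3^{n-1},
\]
after reindexing $k=n-p$ and applying the binomial theorem. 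Both routes agree.

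The binomial manipulations are harmless; the only genuine content is the idempotent characterization $a_{i}=\min A_{i}$ and, in the second route, the recognition that Lemma \ref{un} converts the idempotent count into the already-computed $\mathcal{R}^{*}$-class count. I expect the main obstacle to be verifying carefully that the correspondence between idempotents and (domain, convex-partition) pairs is genuinely a bijection --- in particular that $a_{i}=\min A_{i}$ is forced in one direction and that isotonicity imposes no extra constraint in the other --- rather than any difficulty in the enumeration itself.
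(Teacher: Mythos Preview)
Your proposal is correct, and your second route is precisely the paper's proof: the paper simply invokes Lemma~\ref{ssch} and sums $\binom{n-1}{p-1}2^{n-p}$ over $p=1,\ldots,n$ (implicitly using Lemma~\ref{un} to identify the idempotent count with the $\mathcal{R}^{*}$-class count). Your first route, counting directly over domain size via the bijection between idempotents and (domain, convex-partition) pairs, is a valid self-contained alternative that bypasses the $\mathcal{R}^{*}$-class machinery; it buys independence from the earlier structural lemmas at the cost of redoing the idempotent characterization, while the paper's route exploits the work already done.
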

  \begin{proof}The result follows from Lemma \ref{ssch} by  summing up $\binom{n-1}{p-1}2^{n-p}$ from $p=1$ to $p=n$.
  \end{proof}

  \begin{remark}
  Notice that $1\in \dom \, \alpha$ for every  $\alpha \in M(n,p)$. Thus $M(n,p)$ has $\binom{n-1}{p-1}2^{n-p}$ $\mathcal{R}^{*}-$classes and $\binom{n}{p}$ $\mathcal{L}^{*}-$classes in each of its $J^{*}_{p}$. Similarly, the Rees quotient semigroup ${RSS}_{n}(p)$ has $\binom{n-1}{p-1}2^{n-p}+1$ $\mathcal{R}^{*}-$classes and $\binom{n}{p}+1$ $\mathcal{L}^{*}-$classes.
\end{remark}

\section{Rank properties}

Let $S$ be a semigroup and $A$ be a nonempty subset of $S$. The \emph{smallest subsemigroup} of $S$ that contains $A$ is called the \emph{  subsemigroup generated by $A$} usually denoted by $\langle A \rangle$. If there exists a finite subset $A$ of a semigroup $S$ such that $\langle A \rangle$ equals $S$, then $S$ is referred to as a \emph{finitely-generated semigroup}. The \emph{rank} of a finitely generated semigroup $S$ is defined as the minimum cardinality of a subset $A$ such that $\langle A \rangle$ equals $S$. i.e.,
\[
\text{rank}(S) = \min\{|A| : \langle A \rangle = S\}.
\]
\noindent If the set $A$  consists exclusively of the idempotents in $S$, then $S$ is called \emph{idempotent-generated} (equivalently, a \emph{semiband}), and the idempotent-rank is  denoted by  $\text{idrank}(S)$.
The monoid  $\mathcal{LS}_{n}$ first appeared in Ganyushkin and  Mazorchuk \cite {gmv}, where it was shown that it is idempotent generated. Moreover, the combinatorial properties of the semigroup  have been explored in \cite{al3}, where  it was shown that the size (or order) of $\mathcal{LS}_{n}$ corresponds to the \emph{large} (or \emph{double}) \emph{Schr\"{o}der number}: \[s_{0}=1, \quad s_{n}= \frac{1}{n+1} \sum\limits_{r=0}^{n}\binom{n+1}{n-r}\binom{n+r}{r} \quad (n\geq 1).\]
\noindent It is important to note that Dimitrova and Koppitz  \cite{dm} examines the rank of the semigroup of all order-preserving and \emph{extensive} (which means order-increasing) partial transformations on a finite chain, denoted by $\mathcal{POE}_{n}$. This monoid can easily be shown to be isomorphic to the large \emph{Schr\"{o}der} monoid $\mathcal{LS}_{n}$ (see \cite{umar}); thus, the rank of $\mathcal{LS}_{n}$ can easily be obtained from [\cite{dm}, Proposition 4.0], by isomorphism. However, we present the result and proof for the sake of completeness and the generating elements. Moreover, Ping \emph{et al.} \cite{png} generalized the results of Dimitrova and Koppitz \cite{dm}, where they obtained the rank of the two-sided ideals of $\mathcal{POE}_{n}$. The rank of the two-sided ideals of $\mathcal{LS}_{n}$ can also be obtained through isomorphism from [\cite{png}, Proposition 2.6]. Furthermore, both articles fail to recognize that each of the objects considered has a minimum generating set (not minimal) since each of the object is an $\mathcal{R}$-trivial semigroup; thus, most of the proofs given in the two articles are belabored. In this section, we provide among other results the proof of the rank properties of these objects.

For a more detailed discussion about  ranks in  semigroup theory, we refer the reader to \cite{hrb, hrb2}. Several authors have explored the ranks, idempotent ranks, and nilpotent ranks of various classes of semigroups of  transformations. Notably, the works of Gomes and Howie \cite{gm, gm2, gm3}, Howie and McFadden \cite{hf}, Garba \cite{g1, g2, gu1},  Umar \cite{umar,  ua1, ua} and Zubairu \emph{et. al.,} \cite{zm1} are here emphasized. The large Schröder monoid $\mathcal{LS}_{n}$ has been shown to be idempotent-generated in [\cite{gmv}, Theorem 14.4.5], where it first appeared. Our aim is to compute the rank of the two sided ideal  $M(n,p)$ of the Schr\"{o}der monoid  $\mathcal{SS}_{n}$, thereby obtaining the rank of  $\mathcal{SS}_{n}$, as special cases. We first note the following definitions and a well known result about decreasing maps from \cite{umar, ua3}.

Let $f(\alpha)$ be the cardinal of
\[F(\alpha) =\{x\in \dom \, \alpha: \, x\alpha=x\} ;\]
 the set of fixed points of the map $\alpha$. Then we have the following lemma.

\begin{lemma}\label{hq} For all order decreasing partial maps $\alpha$ and $\beta$ on $A\subseteq [n]$, $F(\alpha\beta)=F(\alpha)\cap F(\beta)=F(\beta\alpha)$.
\end{lemma}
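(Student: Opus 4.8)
The plan is to reduce the two-equality statement to a single inclusion argument. Since the middle term $F(\alpha) \cap F(\beta)$ is symmetric in $\alpha$ and $\beta$, it will suffice to prove $F(\alpha\beta) = F(\alpha) \cap F(\beta)$; the companion equality $F(\beta\alpha) = F(\alpha) \cap F(\beta)$ then follows at once by interchanging the roles of the two maps, so no separate work is needed for it.

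To establish $F(\alpha\beta) = F(\alpha) \cap F(\beta)$ I would prove the two inclusions. For $F(\alpha) \cap F(\beta) \subseteq F(\alpha\beta)$, I would take $x$ fixed by both maps: then $x \in \dom \alpha$ with $x\alpha = x \in \dom \beta$, so $x \in \dom(\alpha\beta)$, and $x(\alpha\beta) = (x\alpha)\beta = x\beta = x$, placing $x$ in $F(\alpha\beta)$. This direction uses only the definition of composition and not the order-decreasing hypothesis.

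The reverse inclusion $F(\alpha\beta) \subseteq F(\alpha) \cap F(\beta)$ is where the decreasing property does the work, and it is the only step requiring an idea. Given $x \in F(\alpha\beta)$, I have $x \in \dom \alpha$, $x\alpha \in \dom \beta$, and $(x\alpha)\beta = x$. Setting $y = x\alpha$, decreasingness of $\alpha$ gives $y \leq x$ and decreasingness of $\beta$ gives $y\beta \leq y$; but $y\beta = x$, so I obtain the chain $x = y\beta \leq y = x\alpha \leq x$. This squeeze forces $x\alpha = x$, so that $x \in F(\alpha)$, and it also forces $y = x$, whence $x\beta = y\beta = x$ with $x \in \dom \beta$, so that $x \in F(\beta)$.

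The only genuine obstacle is spotting the correct inequality chain: inserting the intermediate value $x\alpha = y$ and applying the decreasing property to each map in turn sandwiches $x$ between itself, collapsing membership in $F(\alpha\beta)$ to joint fixedness. There is no induction and no case analysis, and the symmetry of the intersection disposes of the third expression $F(\beta\alpha)$ immediately.
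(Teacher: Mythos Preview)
Your proof is correct and is precisely the standard squeeze argument for this result. The paper itself does not give an explicit proof here: it disposes of the trivial case where one of the maps is empty and then simply refers to Lemma~2.1 of \cite{ua3} for the nontrivial case. What you have written is exactly the argument behind that reference, so your approach coincides with the paper's (deferred) one, while being self-contained.
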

\begin{proof} If $\alpha$ or $\beta$ is zero (i.e., the empty map), the result follows. Now suppose $\alpha$ and $\beta$ are nonzero partial order decreasing maps. The proof is the same as the proof of Lemma 2. 1.  in \cite{ua3}.
\end{proof}

However, we initiate our examination with the following result about generating elements of  $\mathcal{LS}_{n}$.

\begin{lemma}\label{hq} The large Schr\"{o}der monoid $\mathcal{LS}_{n}$ is idempotent-generated.
\end{lemma}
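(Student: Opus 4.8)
The plan is to show that every element of $\mathcal{LS}_{n}$ can be written as a product of idempotents. The natural strategy is induction on the \emph{defect} $n - f(\alpha)$, where $f(\alpha) = |F(\alpha)|$ counts fixed points, exploiting the order-decreasing structure together with Lemma \ref{hq}. First I would note the base case: by Corollary \ref{rem1}, an element of $\mathcal{LS}_{n}$ is idempotent precisely when it is regular, and a decreasing isotone map is idempotent exactly when every point of its domain is a fixed point; so when $n - f(\alpha)$ is minimal (i.e.\ $\alpha$ fixes all of its domain), $\alpha$ is already idempotent and there is nothing to prove.

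For the inductive step I would take $\alpha$ as in \eqref{1} and locate a point $x \in \dom\,\alpha$ with $x\alpha = a_i < x$, i.e.\ a point that is moved strictly downward (such a point exists whenever $\alpha$ is not idempotent, since decreasing plus non-idempotent forces a strict descent somewhere). The key construction is to factor $\alpha = \varepsilon\,\beta$ (or $\beta\,\varepsilon$) where $\varepsilon$ is a suitably chosen idempotent of $\mathcal{LS}_{n}$ and $\beta \in \mathcal{LS}_{n}$ strictly increases the fixed-point count, so that $n - f(\beta) < n - f(\alpha)$ and the induction hypothesis applies to $\beta$. Concretely, one picks an idempotent that "collapses" one adjacent pair or retracts one moved point onto a fixed value in a single controlled step, checking that the resulting $\beta$ is still isotone and order-decreasing; Lemma \ref{hq} guarantees $F(\varepsilon) \cap F(\beta) = F(\varepsilon\beta)$, which is what lets me track the fixed-point count across the factorization and certify that the defect genuinely drops.

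The main obstacle, and the step demanding the most care, is producing the idempotent factor $\varepsilon$ so that simultaneously (a) $\varepsilon$ is itself idempotent and lies in $\mathcal{LS}_{n}$ — i.e.\ isotone, order-decreasing, and with $x\varepsilon = x$ on its image; (b) the complementary factor $\beta$ remains isotone and order-decreasing, which is delicate because collapsing or lowering one coordinate can destroy monotonicity of the quotient map; and (c) the defect strictly decreases. I would handle this by choosing $\varepsilon$ to act as the identity off a single "active" block and to merge exactly one pair of adjacent kernel classes (or shift one non-fixed image value up to the minimum of its block), verifying in each case that the decreasing and isotone conditions are preserved by a direct inequality check on the relevant coordinates. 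Since $\mathcal{LS}_{n}$ is $\mathcal{R}$-trivial and finite, this descent must terminate, yielding $\alpha$ as a finite product of idempotents.

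An alternative, cleaner route I would keep in reserve appeals to the literature: the result is already recorded in Ganyushkin and Mazorchuk [\cite{gmv}, Theorem 14.4.5], so one may simply cite it; but for the sake of exhibiting explicit generators (which the rank computations later in the paper require), I would prefer the constructive induction above, as it simultaneously produces the idempotent factorization and sets up the identification of a candidate generating set of idempotents.
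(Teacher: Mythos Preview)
Your inductive strategy on the defect $n - f(\alpha)$ is a legitimate route and can be completed, but it differs substantially from the paper's argument. The paper gives a direct, non-inductive factorization: for $\alpha$ as in \eqref{1} with $t_i = \min A_i$, it writes down $p+1$ explicit idempotents $\epsilon, \epsilon_1, \ldots, \epsilon_p$ in $J^{*}_p$ (where $\epsilon$ sends each block $A_i$ to its minimum $t_i$, and each $\epsilon_i$ is the partial map on $\{a_1,\ldots,a_{i-1},a_i,t_i,t_{i+1},\ldots,t_p\}$ that collapses $\{a_i,t_i\}$ to $a_i$ and fixes everything else) and verifies in one line that $\epsilon\epsilon_1\cdots\epsilon_p = \alpha$. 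The crucial payoff of this explicit construction is that every factor has height exactly $p = h(\alpha)$; this height-preservation is precisely what the paper exploits immediately afterwards in Lemma~\ref{hh} and in the rank computations for the Rees quotients ${RLS}_n(p)$ and ${RSS}_n(p)$. Your inductive descent, as written, does not control the heights of the intermediate idempotent factors, so even once completed it would not feed directly into those later results.

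There is also a slip in your base case: it is not true that a decreasing isotone partial map is idempotent \emph{exactly} when every point of its domain is fixed. The idempotents of $\mathcal{LS}_n$ are the maps satisfying $a_i = \min A_i$ for all $i$ (equivalently, fixing every point of their \emph{image}), and these can certainly move non-minimal elements of a block; for instance $\left(\begin{smallmatrix}\{1,2\}\\ 1\end{smallmatrix}\right)$ is idempotent but does not fix $2$. Your induction still goes through if the base case is simply ``$\alpha$ is idempotent'' rather than ``$\alpha$ is a partial identity'', but the characterization as stated is incorrect and would leave idempotents of positive defect stranded outside both the base case and the inductive step as you describe it.
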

\begin{proof} Let $\alpha\in \mathcal{LS}_{n}$ be as expressed in  \eqref{1}. If $p=0$, then $\alpha$ is the empty map which is an idempotent, and the result follows.

Now suppose $1\le p\le n$ and let $t_{i}=\min A_{i}$ for $1\le i \le p$. Notice that the maps defined as \[\epsilon=\begin{pmatrix}A_1&\cdots&A_p\\t_1&\cdots&t_p\end{pmatrix}\] \noindent and
\[\epsilon_{i}=\begin{pmatrix}a_1&\cdots & a_{i-1}&\{a_{i},t_{i}\}&t_{i+1}&\cdots &t_p\\a_1&\cdots&a_{i-1}&a_{i}&t_{i+1}&\cdots&t_p\end{pmatrix} \, \, (1\le i\le p)\]
\noindent are idempotents in $J^{*}_{p}$.

Moreover,
\begin{align*}\epsilon\epsilon_{1}\cdots\epsilon_{p}=&
\begin{pmatrix}
A_1 &  \cdots &  A_p \\
t_1 &  \cdots &  t_p
\end{pmatrix}
\begin{pmatrix}
\{t_{1},a_1\} & t_2 & \cdots &  t_p \\
a_1 & t_2 & \cdots &  t_p
\end{pmatrix}
\begin{pmatrix}
a_1 & \{a_{2},t_2\} & t_{3} & \cdots &  t_p \\
a_1 & a_2 & t_{3} & \cdots &  t_p
\end{pmatrix}
\cdots
\begin{pmatrix}
a_1 & \cdots & a_{p-1} & \{a_{p}, t_p\} \\
a_1 &  \cdots & a_{p-1} & a_p
\end{pmatrix}
\\=& \begin{pmatrix}
A_1 & \cdots &  A_p \\
a_1 &  \cdots &  a_p
\end{pmatrix}
= \alpha.
\end{align*}
 The result now follows.
\end{proof}

Notice that in the proof of the above lemma, $|\im \, \alpha|=h(\alpha)=h(\epsilon)=h(\epsilon_{i})=p$ for all $1\le i\le p$. Consequently, we have the following result.

\begin{lemma}\label{hh}
Every element  in $S\in\{{RSS}_{n}(p), \, {RLS}_{n}(p), \, K(n,p), \,  M(n,p) \}$ of height $p$ can be expressed as a product of idempotents in $S$, each of height $p$.
\end{lemma}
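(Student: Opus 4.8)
The plan is to reuse the explicit idempotent factorisation built in the proof of Lemma \ref{hq}, and then to verify case by case that its factors lie in the smaller semigroup $S$ and survive the relevant Rees quotient. Accordingly, let $\alpha \in S$ have height $p$, written as in \eqref{1} with $t_i = \min A_i$ for $1 \le i \le p$. The proof of Lemma \ref{hq} supplies a factorisation $\alpha = \epsilon\epsilon_1\cdots\epsilon_p$ in which $\epsilon$ and each $\epsilon_i$ are idempotents and, as noted immediately after that lemma, $h(\epsilon) = h(\epsilon_i) = p$. For $S = K(n,p)$ nothing further is needed: each factor is an idempotent of $\mathcal{LS}_{n}$ of height $p$, hence belongs to $K(n,p)$, so the displayed equation is already the required factorisation.

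For $S = M(n,p)$ I would additionally check that every factor has $1$ in its domain. Since $\alpha \in \mathcal{SS}_{n}$ we have $1 \in A_1$, whence $t_1 = \min A_1 = 1$ and $a_1 = 1$. Therefore $1 \in \dom \alpha = \dom \epsilon$, and $1 = a_1 \in \dom \epsilon_i$ for every $i$; so all factors lie in $\mathcal{SS}_{n}$, and thus in $M(n,p)$, which settles this case.

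The step requiring the most care is the passage to the two Rees quotients, where a product equals its value in the parent semigroup only when that value still has height $p$ and equals $0$ otherwise. Here I would use that the natural surjection $\pi$ of $K(n,p)$ (resp.\ $M(n,p)$) onto its Rees quotient is a semigroup homomorphism. Since each of $\epsilon, \epsilon_1, \ldots, \epsilon_p$ has height $p$, none is collapsed, so their images are nonzero idempotents of height $p$ in the quotient; applying $\pi$ to $\alpha = \epsilon\epsilon_1\cdots\epsilon_p$ then yields $\alpha = \pi(\epsilon)\pi(\epsilon_1)\cdots\pi(\epsilon_p)$ in ${RLS}_{n}(p)$ (resp.\ ${RSS}_{n}(p)$). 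The homomorphism property automatically guarantees that no partial product drops below height $p$; one can also confirm this directly from the easy identity
\[
\epsilon\epsilon_1\cdots\epsilon_k = \begin{pmatrix} A_1 & \cdots & A_k & A_{k+1} & \cdots & A_p \\ a_1 & \cdots & a_k & t_{k+1} & \cdots & t_p \end{pmatrix},
\]
whose $p$ image entries $a_1, \ldots, a_k, t_{k+1}, \ldots, t_p$ are pairwise distinct because $a_k \le t_k < t_{k+1}$. Together with the domain check of the previous paragraph, this disposes of ${RSS}_{n}(p)$ as well, and all four cases follow.
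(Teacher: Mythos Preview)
Your proposal is correct and follows the same approach as the paper: both rely on the factorisation $\alpha=\epsilon\epsilon_{1}\cdots\epsilon_{p}$ from the proof that $\mathcal{LS}_{n}$ is idempotent-generated, together with the observation (recorded in the paper immediately before Lemma~\ref{hh}) that every factor has height $p$. The paper leaves the verification at that, whereas you additionally check the domain condition $1\in\dom$ for the $\mathcal{SS}_{n}$-based semigroups and confirm that no partial product drops below height $p$ in the Rees quotients; these extra checks are correct and make the argument more self-contained, but the underlying idea is identical.
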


The next result shows that the set of nonzero idempotents in ${RLS}_{n}(p)$ (${RSS}_{n}(p)$) is the minimum generating set of ${RLS}_{n}(p)\setminus \{0\}$ (${RSS}_{n}(p)\setminus \{0\}$).
 \begin{lemma} Let $\alpha$, $\beta$ be elements in ${RLS}_{n}(p)\setminus \{0\}$ \emph{(}resp., $\alpha$, $\beta$ in ${RSS}_{n}(p)\setminus \{0\}$\emph{)}. Then $\alpha\beta\in E({RLS}_{n}(p)\setminus \{0\})$ \emph{(}resp., $\alpha\beta\in E({RSS}_{n}(p)\setminus \{0\})$\emph{)} if and only if $\alpha, \, \beta\in E({RLS}_{n}(p)\setminus \{0\})$  and $\alpha\beta=\alpha$ \emph{(}resp., $\alpha, \, \beta\in E({RSS}_{n}(p)\setminus \{0\})$ and $\alpha\beta=\alpha$\emph{)}.
\end{lemma}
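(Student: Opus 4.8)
The plan is to dispose of the easy implication first and then reduce the hard one to the fixed-point lemma (the identity $F(\alpha\beta)=F(\alpha)\cap F(\beta)$ for order-decreasing maps). For the ``if'' direction, if $\alpha,\beta\in E({RLS}_{n}(p)\setminus\{0\})$ and $\alpha\beta=\alpha$, then $\alpha\beta$ equals $\alpha$, which is by hypothesis a nonzero idempotent, so there is nothing to prove. The work is entirely in the converse.

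For the ``only if'' direction I would set $\gamma=\alpha\beta$ and first note that, since $\gamma$ is a nonzero element of the Rees quotient and is assumed idempotent there, its square is nonzero; hence $\gamma^{2}=\gamma$ and $h(\gamma)=p$ hold as statements about the genuine product in $\mathcal{LS}_{n}$, so $\gamma$ is an honest order-decreasing idempotent of height $p$. The first observation is that for any order-decreasing idempotent $e$ one has $F(e)=\im\,e$: the inclusion $\im\,e\subseteq F(e)$ follows from $e^{2}=e$ (which also forces $\im\,e\subseteq\dom\,e$), and $F(e)\subseteq\im\,e$ is immediate since every fixed point lies in the image. Applying this to $\gamma$ gives $|F(\gamma)|=h(\gamma)=p$.

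Next I would invoke the fixed-point lemma to write $F(\gamma)=F(\alpha)\cap F(\beta)$, together with the elementary bounds $|F(\alpha)|\le h(\alpha)=p$ and $|F(\beta)|\le h(\beta)=p$ (again because fixed points lie in the image). The chain $p=|F(\alpha)\cap F(\beta)|\le|F(\alpha)|\le p$ then forces $|F(\alpha)|=p$ and $F(\alpha)=F(\beta)=F(\alpha)\cap F(\beta)$; call this common set $C$. Since $|F(\alpha)|=p=h(\alpha)$ and $F(\alpha)\subseteq\im\,\alpha$ with $|\im\,\alpha|=p$, I obtain $F(\alpha)=\im\,\alpha=C$, and likewise $F(\beta)=\im\,\beta=C$. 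From $F(\alpha)=\im\,\alpha$ it follows that every image point of $\alpha$ is fixed by $\alpha$, whence $\alpha^{2}=\alpha$; the same reasoning gives $\beta^{2}=\beta$. Finally, since $\im\,\alpha=C=F(\beta)$ and $C\subseteq\dom\,\beta$, every $x\in\dom\,\alpha$ satisfies $x(\alpha\beta)=(x\alpha)\beta=x\alpha$, so $\alpha\beta=\alpha$, completing the converse.

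The same argument applies verbatim to ${RSS}_{n}(p)$, since every map involved stays order-decreasing and isotone and the condition $1\in\dom$ is preserved by the operations used. The only point I expect to require care in the write-up is the bookkeeping around domains when passing between the Rees-quotient product and the genuine product in $\mathcal{LS}_{n}$: one must check $\im\,\alpha\subseteq\dom\,\beta$ in order to conclude $\dom(\alpha\beta)=\dom\,\alpha$, and this is supplied precisely by $\im\,\alpha=C=F(\beta)\subseteq\dom\,\beta$.
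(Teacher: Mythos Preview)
Your proof is correct and follows essentially the same route as the paper: both arguments hinge on the fixed-point identity $F(\alpha\beta)=F(\alpha)\cap F(\beta)$ together with the squeeze $p=f(\alpha\beta)\le f(\alpha)\le h(\alpha)=p$ to force $F(\alpha)=F(\alpha\beta)=F(\beta)=\im\,\alpha=\im\,\beta$, from which idempotency of $\alpha,\beta$ and $\alpha\beta=\alpha$ follow. Your write-up simply makes explicit the steps the paper leaves implicit (that $F(e)=\im\,e$ for an idempotent, and that $\im\,\alpha=F(\beta)\subseteq\dom\,\beta$ gives $\alpha\beta=\alpha$).
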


\begin{proof} Suppose
$\alpha\beta\in E({RLS}_{n}(p)\setminus \{0\})$ (resp., $\alpha\beta\in E({RSS}_{n}(p)\setminus \{0\})$). Then \[
p= f(\alpha\beta)\leq f(\alpha)\leq |\im \, \alpha|=p,\]
\[
p= f(\alpha\beta)\leq f(\beta)\leq |\im \, \beta|=p.\]

This ensures that \[F(\alpha)=F(\alpha\beta)=F(\beta), \] \noindent and so $\alpha, \, \beta\in E({RLS}_{n}(p)\setminus \{0\})$  and $\alpha\beta=\alpha$ (resp., $\alpha, \, \beta\in E({RSS}_{n}(p)\setminus \{0\})$ and $\alpha\beta=\alpha$).

The converse is obvious.
\end{proof}

It is evident that necessity ensues, as the result of multiplying a non-idempotent element with any other element does not yield a non-zero idempotent,  by Lemma \ref{hq}. Consequently, the rank and idempotent rank of ${RLS}_{n}(p)$ (resp., ${RSS}_{n}(p)$) are equivalent. Therefore, we have now established the key results of this section.

\begin{theorem}\label{pb} Let ${RLS}_{n}(p)$ be as defined in \eqref{knn}. Then  \[\text{rank } {RLS}_{n}(p)= \text{idrank } {RLS}_{n}(p)=|E({RLS}_{n}(p)\setminus\{0\})|=\sum\limits_{r=p}^{n}{\binom{n}{r}}{\binom{r-1}{p-1}}.\]
\end{theorem}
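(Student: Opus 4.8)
The plan is to establish the three equalities in Theorem~\ref{pb} by first reducing the rank computation to an idempotent-counting problem and then invoking the combinatorial count already available in the excerpt. The chain of equalities reads $\text{rank } {RLS}_{n}(p)= \text{idrank } {RLS}_{n}(p)=|E({RLS}_{n}(p)\setminus\{0\})|=\sum_{r=p}^{n}\binom{n}{r}\binom{r-1}{p-1}$, so my strategy is to prove two inequalities that pinch the rank between $|E({RLS}_{n}(p)\setminus\{0\})|$ from both sides, and then identify that cardinality with the stated sum.

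First I would establish the upper bound $\text{rank } {RLS}_{n}(p)\le |E({RLS}_{n}(p)\setminus\{0\})|$. By Lemma~\ref{hh}, every element of ${RLS}_{n}(p)$ of height $p$ is a product of idempotents of height $p$, i.e.\ of nonzero idempotents in the Rees quotient; together with the zero (which is the image of everything of lower height), this shows the nonzero idempotents generate ${RLS}_{n}(p)$. Hence $\text{rank } {RLS}_{n}(p)\le \text{idrank } {RLS}_{n}(p)\le |E({RLS}_{n}(p)\setminus\{0\})|$. The key step for the reverse direction is the lower bound $\text{rank } {RLS}_{n}(p)\ge |E({RLS}_{n}(p)\setminus\{0\})|$, and this is where the preceding lemma does the essential work: it shows that a nonzero idempotent $\epsilon$ can \emph{only} be written as $\alpha\beta$ with $\alpha\beta=\alpha$ and both factors themselves nonzero idempotents with $\alpha=\epsilon$. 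Consequently every nonzero idempotent is indecomposable in the sense that it cannot be produced as a product unless one of the factors already equals it, so any generating set must contain every nonzero idempotent. This forces $\text{rank }\ge |E({RLS}_{n}(p)\setminus\{0\})|$ and simultaneously pins down idrank, yielding all three equalities among rank, idrank, and the idempotent count.

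It remains to evaluate $|E({RLS}_{n}(p)\setminus\{0\})|$. Here I would appeal to Lemma~\ref{un}, which guarantees each $\mathcal{R}^{*}$-class contains a unique idempotent, so the nonzero idempotents in ${RLS}_{n}(p)$ are in bijection with the $\mathcal{R}^{*}$-classes of height exactly $p$. The excerpt already records that the number of such classes equals $\sum_{r=p}^{n}\binom{n}{r}\binom{r-1}{p-1}$ (from the Remark citing [\cite{al3}, Lemma 4.1]), which completes the count. Thus $|E({RLS}_{n}(p)\setminus\{0\})|=\sum_{r=p}^{n}\binom{n}{r}\binom{r-1}{p-1}$, and the theorem follows.

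The main obstacle I anticipate is not the combinatorial count, which is imported wholesale, but making the indecomposability argument for the lower bound fully rigorous in the Rees quotient setting: one must be careful that products landing in lower-height elements become the zero of ${RLS}_{n}(p)$, so that a nonzero idempotent of height $p$ genuinely cannot arise as a product of two elements neither of which equals it. The preceding lemma (via Lemma~\ref{hq} on fixed points and the inequalities $p=f(\alpha\beta)\le f(\alpha)\le|\im\,\alpha|=p$) handles exactly this, so the real task is to cite it correctly and observe that an indecomposable element of a semigroup must lie in every generating set. Once that observation is in place, the pinching argument closes immediately.
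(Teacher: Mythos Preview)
Your proposal is correct and follows essentially the same route as the paper: the paper's proof is a one-line appeal to Lemma~\ref{un} and the $\mathcal{R}^{*}$-class count, but it implicitly rests on exactly the pinching argument you spell out, namely Lemma~\ref{hh} for the upper bound and the immediately preceding indecomposability lemma (via Lemma~\ref{hq}) for the lower bound. You have simply made explicit the steps that the paper compresses into the surrounding discussion.
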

\begin{proof} It follows from the fact that there are $\sum\limits_{r=p}^{n}{\binom{n}{r}}{\binom{r-1}{p-1}}$ $\mathcal{R}^{*}-$classes in ${RLS}_{n}(p)$ and each $\mathcal{R}^{*}-$class contains a unique idempotent, by Lemma \ref{un}.
\end{proof}

\begin{theorem}\label{mnnn} Let ${RSS}_{n}(p)$ be as defined in \eqref{mnn}. Then  \[\text{rank } {RSS}_{n}(p)= \text{idrank } {RSS}_{n}(p)=|E({RSS}_{n}(p)\setminus\{0\})|=\binom{n-1}{p-1}2^{n-p}.\]
\end{theorem}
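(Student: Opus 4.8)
The plan is to mirror exactly the structure of the proof of Theorem \ref{pb}, replacing the count of $\mathcal{R}^{*}$-classes for ${RLS}_{n}(p)$ with the corresponding count for ${RSS}_{n}(p)$. The argument rests on three facts already established in the excerpt. First, by Lemma \ref{hh}, every nonzero element of ${RSS}_{n}(p)$ (which has height exactly $p$) can be written as a product of idempotents of ${RSS}_{n}(p)$, so the nonzero idempotents generate ${RSS}_{n}(p)\setminus\{0\}$; hence ${RSS}_{n}(p)$ is idempotent-generated and $\text{rank } {RSS}_{n}(p)=\text{idrank } {RSS}_{n}(p)$. Second, by Lemma \ref{un}, every $\mathcal{R}^{*}$-class of ${RSS}_{n}(p)$ contains a \emph{unique} idempotent. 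Third, the preceding Remark records that ${RSS}_{n}(p)$ has exactly $\binom{n-1}{p-1}2^{n-p}+1$ $\mathcal{R}^{*}$-classes, one of which is the singleton class $\{0\}$; thus the number of nonzero $\mathcal{R}^{*}$-classes, and so the number of nonzero idempotents, is $\binom{n-1}{p-1}2^{n-p}$.

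First I would argue the upper bound: since the set $E({RSS}_{n}(p)\setminus\{0\})$ of nonzero idempotents generates ${RSS}_{n}(p)\setminus\{0\}$ (by Lemma \ref{hh}), we have $\text{rank } {RSS}_{n}(p)=\text{idrank } {RSS}_{n}(p)\leq |E({RSS}_{n}(p)\setminus\{0\})|=\binom{n-1}{p-1}2^{n-p}$. Next I would argue the matching lower bound using the preceding Lemma, which shows that a nonzero idempotent of ${RSS}_{n}(p)$ can only be obtained as a product of nonzero idempotents (indeed, if $\alpha\beta\in E({RSS}_{n}(p)\setminus\{0\})$ then $\alpha,\beta\in E({RSS}_{n}(p)\setminus\{0\})$ with $\alpha\beta=\alpha$). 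Consequently each idempotent lies in a distinct $\mathcal{R}^{*}$-class, and since each $\mathcal{R}^{*}$-class holds exactly one idempotent, no idempotent can be expressed as a product of the others; every generating set must therefore contain all $\binom{n-1}{p-1}2^{n-p}$ nonzero idempotents. Combining the two bounds yields the claimed equalities.

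I do not expect a genuine obstacle here, because all the heavy lifting has already been done in Lemma \ref{hh}, Lemma \ref{un}, and the Lemma on products of idempotents. The only point requiring care is the lower-bound argument: one must verify that an $\mathcal{R}$-trivial ($\mathcal{R}^{*}$-unique-idempotent) structure forces each nonzero idempotent to be indispensable in any generating set. This follows because, as the text remarks, multiplying a non-idempotent by anything never produces a nonzero idempotent (by Lemma \ref{hq} and the $F(\alpha\beta)=F(\alpha)\cap F(\beta)$ identity), so the nonzero idempotents form the \emph{minimum} generating set rather than merely a generating set. I would phrase the final proof tersely, simply citing the count of $\mathcal{R}^{*}$-classes from the Remark and Lemma \ref{un} exactly as Theorem \ref{pb} does, noting that the sole modification is the substitution of $\binom{n-1}{p-1}2^{n-p}$ (established in Lemma \ref{ssch}) for the large-monoid count.
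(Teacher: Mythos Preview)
Your proposal is correct and follows essentially the same approach as the paper: the paper's proof is the one-line observation that there are $\binom{n-1}{p-1}2^{n-p}$ nonzero $\mathcal{R}^{*}$-classes in ${RSS}_{n}(p)$ and each contains a unique idempotent by Lemma~\ref{un}, exactly mirroring Theorem~\ref{pb}. Your elaboration of the upper and lower bounds via Lemma~\ref{hh} and the preceding lemma on idempotent products simply unpacks the reasoning that the paper leaves implicit.
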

\begin{proof} It follows from the fact that there are $\binom{n-1}{p-1}2^{n-p}$ $\mathcal{R}^{*}-$classes in ${RSS}_{n}(p)$ and each $\mathcal{R}^{*}-$class contains a unique idempotent, by Lemma \ref{un}.
\end{proof}

The next lemma is crucial to determine the ranks of the Schr\"{o}der monoids $\mathcal{LS}_{n}$  and $\mathcal{SS}_{n}$.  Now  for $1\leq p\leq n$ let \[J^{*}_{p}=\{\alpha\in \mathcal{LS}_{n}: |\im \, \alpha|=p \}.\]

\begin{lemma}\label{lm1} For $0\leq p\leq n-2$, $J^{*}_{p}\subset \langle J^{*}_{p+1}\rangle$. In other words, if $\alpha\in J^{*}_{p}$ then $\alpha\in \langle J^{*}_{p+1}\rangle$ for $1\leq p\leq n-2$.
\end{lemma}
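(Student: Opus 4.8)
The plan is to factor each element of $J^*_p$ into elements of $J^*_{p+1}$, and the first move is to reduce to idempotents. By the idempotent-generation of $\mathcal{LS}_n$ (Lemma \ref{hq}), and more precisely by the explicit factorization $\alpha=\epsilon\epsilon_1\cdots\epsilon_p$ exhibited in its proof, every $\alpha\in J^*_p$ is a product of idempotents of height exactly $p$. Hence $\langle J^*_{p+1}\rangle$ will contain $J^*_p$ as soon as it contains every idempotent of height $p$, so it suffices to prove $E(J^*_p)\subseteq\langle J^*_{p+1}\rangle$. I would fix such an idempotent, written $e=\begin{pmatrix}A_1&\cdots&A_p\\ t_1&\cdots&t_p\end{pmatrix}$ with $t_i=\min A_i$, and set $F=[n]\setminus\{t_1,\ldots,t_p\}$; since $p\le n-2$ one has $|F|=n-p\ge 2$, which is the crucial arithmetic input (and explains the bound, since for $p=n-1$ one has $J^*_n=\{1_{[n]}\}$, which generates nothing useful).

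If $e$ is a partial identity, that is, every $A_i=\{t_i\}$ and $e=1_D$ with $D=\{t_1,\ldots,t_p\}$, I would pick two distinct free values $x,y\in F$ and write $e=1_{D\cup\{x\}}\cdot 1_{D\cup\{y\}}$. Both factors are order-decreasing isotone idempotents of height $p+1$, and since $x\ne y$ their product is $1_{(D\cup\{x\})\cap(D\cup\{y\})}=1_D=e$; this settles the injective case cleanly.

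In the remaining case some class has $|A_i|\ge 2$; I would put $A_i^{+}=A_i\setminus\{t_i\}$ and $s=\min A_i^{+}$, so that $t_i<s$ and $s\in F$ (as $s$ lies in $A_i$ but is not an image point of $e$). I would then choose the intermediate value $v:=\min\bigl(F\cap(t_i,s]\bigr)$, which exists because $s\in F\cap(t_i,s]$, together with any other free value $z\in F\setminus\{v\}$ (available since $|F|\ge 2$) to serve as a compensating fixed point. Next I would let $f$ be $e$ with the class $A_i$ split as $\{t_i\}\mapsto t_i$ and $A_i^{+}\mapsto v$, and let $g$ be the idempotent sending the block $\{t_i,v\}$ to $t_i$, fixing every other $t_j$, and fixing $z$. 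One then checks $f,g\in J^*_{p+1}$ and $fg=e$: under $f$ each $A_j$ with $j\ne i$ and the singleton $\{t_i\}$ behave exactly as in $e$ while $A_i^{+}\mapsto v$, and $g$ returns $v$ to $t_i$, so $A_i\mapsto t_i$ and $e$ is recovered.

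The substance of the argument, and the step I expect to be the real obstacle, is verifying that $f$ and $g$ are genuinely isotone and order-decreasing, since for an isotone map every kernel class must be order-convex. For $f$ this reduces to $t_i<v\le s<\min A_{i+1}=t_{i+1}$ (the top case $i=p$ being only easier), which is immediate. For $g$ it reduces to the block $\{t_i,v\}$ being convex in $\dom\,g=\{t_1,\ldots,t_p,v,z\}$; no $t_j$ lies in $(t_i,v)$, and the minimality of $v$ guarantees $(t_i,v)\cap F=\emptyset$, so the spare value $z$ cannot fall into $(t_i,v)$ no matter where it sits. It is precisely this freedom to place $z$ outside the critical gap, guaranteed by having at least two free values, that makes the construction go through, and organizing the choices of $v$ and $z$ so that both maps stay admissible is the only delicate point.
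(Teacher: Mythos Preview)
Your proof is correct and notably more economical than the paper's. Both arguments begin by reducing to idempotents via the explicit factorization in Lemma~\ref{hq}, but from there they diverge. The paper distinguishes first whether $\epsilon$ is full or strictly partial; in the partial case it selects a point $q$ in the complement of the domain and then branches on the position of $q$ relative to the blocks $A_1,\ldots,A_p$ (inside some $A_i$, between consecutive blocks, or above $A_p$), with further sub-branching on block sizes, yielding roughly nine separate constructions, each producing a factorization into \emph{idempotents} of height $p+1$. You instead isolate only two scenarios --- $e$ a partial identity, or some $|A_i|\ge 2$ --- and in the second make a single uniform construction, the key device being the choice $v=\min\bigl(F\cap(t_i,s]\bigr)$, which forces $(t_i,v)\cap F=\emptyset$ and hence lets the spare value $z$ sit anywhere in $F\setminus\{v\}$ without destroying the convexity of $\{t_i,v\}$ in $\dom g$. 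The price is that your $f$ need not be an idempotent (it is one precisely when $v=s$), but since $J^*_{p+1}\subseteq\langle E(J^*_{p+1})\rangle$ by Lemma~\ref{hh} this costs nothing for the lemma as stated; the gain is a two-case argument in place of a lengthy case analysis.
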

\begin{proof} It suffices to prove that every idempotent of height $p$ can be expressed as product of idempotents of height $p+1$, by Lemma \ref{hq}. Let $\epsilon\in E(J^{*}_{p})$ be expressed as: \[\epsilon= \begin{pmatrix}
A_1 &  \cdots  & A_p \\
t_1 &  \cdots  & t_p
\end{pmatrix},  \] \noindent where $\min A_{i}=t_{i}$ for all $1\le i\le p$.

Now $\epsilon$ is either a full map or a partial map. We shall first consider the case when $\epsilon$ is a full map.

\noindent \textbf{Case 1.} Suppose $\epsilon$ is a full idempotent map of height $p$. Then since $p\le n-2$, then there exists $1\le i\le p$,  such that $|A_{i}|\ge 2$. Thus, there are two subcases to consider.\\

\noindent\textbf{Subcase i.} Suppose $|A_{i}|=2$ and let $A_{i}=\{t_{i}, x_{i_{1}}\}$. Then there exists $1\leq j\leq p$,  such that $|A_{j}|\ge 2$. Now let $A_{j}=\{t_{j}, y_{j_{1}},y_{j_{2}},  \ldots, t_{j+1}-1\}$, where we may suppose without  loss of generality that $i<j$. Then define $\epsilon_{1}$ and $\epsilon_{2}$ as:

\[\epsilon_{1}=\begin{pmatrix}
A_1  & \cdots & A_{i-1} & t_{i} & x_{i_{1}}& A_{i+1}&\cdots & A_{p} \\
t_1  & \cdots & t_{i-1} &t_{i}&x_{i_{1}}& t_{i+1}& \cdots &  t_p
\end{pmatrix}\]
 \noindent and

  \[\epsilon_{2}=\left( \begin{array}{cccccccccccc}
t_{1}  & \cdots & t_{i-1} & \{t_{i}, x_{i_{1}} \} & t_{i+1} & \cdots & t_{j} &  y_{j_{1}}  & t_{j+1} & \cdots & t_{p}\\
t_{1} & \cdots & t_{i-1} & t_{i} & t_{i+1} & \cdots & t_{j} & y_{j_{1}} & t_{j+1} & \cdots &  t_{p}
\end{array}
   \right).\]
 \\
\noindent\textbf{Subcase ii.} Suppose $|A_{i}|>2$, and let $A_{i}=\{t_{i}, x_{i_{1}}, \, x_{i_{2}} \, \ldots, \, t_{i+1}-1\}$. Thus  define $\epsilon_{1}$ and $\epsilon_{2}$ as:

\[\epsilon_{1}=\begin{pmatrix}
A_1  & \cdots & A_{i-1} & t_{i} & \{x_{i_{1}}, \, x_{i_{2}} \, \ldots, \, t_{i+1}-1 \}& A_{i+1}&\cdots & A_{p} \\
t_1  & \cdots & t_{i-1} &t_{i}& x_{i_{1}} & t_{i+1}& \cdots &  t_p
\end{pmatrix}\]
\noindent and

  \[\epsilon_{2}=\left( \begin{array}{cccccccccccc}
t_{1}  & \cdots & t_{i-1} & \{t_{i}, x_{i_{1}} \} & x_{i_{2}} & t_{i+1} & \cdots & t_{p}\\
t_{1}  & \cdots & t_{i-1} & t_{i} &  x_{i_{2}}& t_{i+1} & \cdots &  t_{p}
\end{array}
   \right).\]
   \noindent  Clearly, in either of the subcases, one can easily see that $\epsilon_{1}$ and $\epsilon_{2}$ are in  $E(J^{*}_{p+1})$, and also in each subcase $\epsilon_{1}\epsilon_{2}=\epsilon$.\\

\noindent\textbf{Case 2.} Now suppose $\epsilon$ is strictly partial. Thus $|(\dom \, \epsilon)^{c}|\ge 1$. Let $q\in (\dom \, \epsilon)^{c}$. Then there are three subcases to consider, i.e.,  either (i) there exists $x_{i_{k}}$ and $x_{i_{k+1}}$ in $A_{i}$ (for some $1\le i\le p$ ),  such that $x_{i_{k}}< q< x_{i_{k+1}}$; (ii) or there exists $1\le i\le p-1$,  such that $\max A_{i}<q<\min A_{i+1}$; (iii) or $\max A_{p}< q$. Thus, we consider the three cases separately.

   \textbf{Subcase i.} Suppose there exists $x_{i_{k}}$ and $x_{i_{k+1}}$ in $A_{i}$ (for some $1\le i\le p$ ),  such that $x_{i_{k}}< q< x_{i_{k+1}}$. Let $A_{i}=\{t_{i}, x_{i_{1}}, \ldots, x_{i_{{k}}}, x_{i_{k+1}}, \ldots\}$. Thus define $\epsilon_{1}$, $\epsilon_{2}$ and $\epsilon_{3}$ as:

   \[\epsilon_{1}=\begin{pmatrix}
A_1 &  \cdots & A_{i-1}  & \{ t_{i}, x_{i_{1}}, \, \ldots, \, x_{i_{k}} \}& \{ x_{i_{k+1}}, x_{i_{k+2}}, \, \ldots  \}&A_{i+1}&\cdots & A_{p} \\
t_1 &  \cdots & t_{i-1} &t_{i}& x_{i_{k+1}}& t_{i+1}& \cdots &  t_p
\end{pmatrix};\]

  \[\epsilon_{2}=\left( \begin{array}{ccccccccccc}
t_{1}  & \cdots &  t_{i} & \{q, x_{i_{k+1}} \} & t_{i+1} & \cdots & t_{p}\\
t_{1}  & \cdots &  t_{i} &  q& t_{i+1} & \cdots &  t_{p}
\end{array}
   \right)\]
\noindent and
     \[\epsilon_{3}=\left( \begin{array}{cccccccccccc}
t_{1}  & \cdots & t_{i-1} & \{t_{i}, q \} &  x_{i_{k+1}}  & t_{i+1} & \cdots & t_{p}\\
t_{1}  & \cdots & t_{i-1} & t_{i} &  x_{i_{k+1}}& t_{i+1} & \cdots &  t_{p}
\end{array}
   \right).\]

\textbf{Subcase ii.} Suppose $\max A_{i}< q< \min A_{i+1}$. Now either \textbf{(a)} $|A_{i}|\geq 2$; or \textbf{(b)} $|A_{i}|= 1$ and $|A_{i+1}|\geq 2$; or \textbf{(c)} $|A_{i}|=|A_{i+1}|=1$ and  $|A_{j}|\geq 2$ for some $i\neq j\neq i+1$; or \textbf{(d)} all blocks of $\epsilon$ are singleton, and so there exists $q^{\prime}\in \, (\dom \, \epsilon)^{c}$ such that $\max A_{j}< q< \min A_{j+1}$ for some $1\leq j\leq p-1$.

 \noindent \textbf{(a.)} Suppose $\max A_{i}< q< \min A_{i+1}$ such that $|A_{i}|\geq 2$. Now let $A_{i}=\{t_{i}, x_{i_{1}}, x_{i_{2}}, \dots\}$. Then define  $\epsilon_{1}$, $\epsilon_{2}$ and $\epsilon_{3}$ as:

   \[\epsilon_{1}=\begin{pmatrix}
A_1 &  \cdots & A_{i-1}  & t_{i}& \{ x_{i_{1}}, x_{i_{2}}, \, \ldots,  \}&A_{i+1}&\cdots & A_{p} \\
t_1 &  \cdots & t_{i-1} &t_{i}& x_{i_{1}}& t_{i+1}& \cdots &  t_p
\end{pmatrix};\]

  \[\epsilon_{2}=\left( \begin{array}{ccccccccccc}
t_{1}  & \cdots & t_{i} & \{ x_{i_{1}}, q \} & t_{i+1} & \cdots & t_{p}\\
t_{1}  & \cdots &  t_{i} &  x_{i_{1}}& t_{i+1} & \cdots &  t_{p}
\end{array}
   \right)\]
\noindent and
     \[\epsilon_{3}=\left( \begin{array}{cccccccccccc}
t_{1}  & \cdots & t_{i-1} & \{t_{i}, x_{i_{1}} \} &  q  & t_{i+1} & \cdots & t_{p}\\
t_{1}  & \cdots & t_{i-1} & t_{i} &  q& t_{i+1} & \cdots &  t_{p}
\end{array}
   \right).\]
\noindent  In either of the two subcases above, one can easily verify that $\epsilon_{1}$, $\epsilon_{2}$ and $\epsilon_{3}$ are in $E(J^{*}_{p+1})$, and also \[\epsilon_{1}\epsilon_{2}\epsilon_{3}=\epsilon.\]

\noindent\textbf{(b.)} Suppose $\max A_{i}< q< \min A_{i+1}$ such that, $|A_{i}|=1$ and  $|A_{i+1}|\geq 2$. Now, let $A_{i+1}=\{t_{i+1}, x_{{(i+1)}_{1}}, x_{{(i+1)}_{2}}, \dots\}$. Then define  $\epsilon_{1}$ and $\epsilon_{2}$ as:

 \[\epsilon_{1}=\begin{pmatrix}
A_1 &  \cdots & A_{i}  & t_{i+1}& \{ x_{{(i+1)}_{1}}, x_{{(i+1)}_{2}}, \, \ldots,  \}&A_{i+2}&\cdots & A_{p} \\
t_1 &  \cdots & t_{i} &t_{i+1}& x_{{(i+1)}_{1}}& t_{i+2}& \cdots &  t_p
\end{pmatrix}\]
\noindent and

  \[\epsilon_{2}=\left( \begin{array}{ccccccccccc}
t_{1}  & \cdots & t_{i} &  q  & \{t_{i+1}, x_{{(i+1)}_{1}}\} & t_{i+2}& \cdots & t_{p}\\
t_{1}  & \cdots &  t_{i} &  q & t_{i+1} & t_{i+2}& \cdots &  t_{p}
\end{array}
   \right).\]
   \noindent   Clearly $\epsilon_{1}$ and $\epsilon_{2}$  in $E(J^{*}_{p+1})$ and   $\epsilon_{1}\epsilon_{2}=\epsilon$.

 \noindent \textbf{(c.)} Suppose $\max A_{i}< q< \min A_{i+1}$ such that $|A_{i}|=|A_{i+1}|=1$ and suppose there exists $1\leq j\leq  p-2$ such that $|A_{j}|\geq 2$, where $i\neq j\neq i+1$. Now let $A_{j}=\{t_{j}, y_{j_{1}}, y_{j_{2}}, \, \ldots\}$ and we may suppose  without  loss of generality that $i<j$. Thus $q<y_{j_{1}}$. Now, define $\epsilon_{1}$ and $\epsilon_{2}$ as:
 \[\epsilon_{1}=\begin{pmatrix}
A_1 &  \cdots & A_{i}  &q& A_{i+1}&\cdots & A_{p} \\
t_1 &  \cdots & t_{i} &q&t_{i+1}& \cdots &  t_p
\end{pmatrix}\]
\noindent and

  \[\epsilon_{2}=\left( \begin{array}{cccccccccc}
t_{1}  & \cdots &   t_{j+1}  & y_{j_{1}} & t_{j+2}& \cdots & t_{p}\\
t_{1}  & \cdots  &   t_{j+1} & y_{j_{1}} & t_{j+2}& \cdots &  t_{p}
\end{array}
   \right).\]

\noindent\textbf{(d.)}  Suppose $\max A_{i}< q< \min A_{i+1}$ is such that $|A_{i}|=1$ for all $1\le i \le p$, and suppose there exists $d\in (\dom \, \epsilon)^{c} $,  such that $\max A_{j}< d<\min A_{j+1}$ for some $1< j< p$. We may suppose without loss of generality that $d<q$.

If $i=j$, then  $\max A_{i}< d<q< \min A_{i+1}$. Thus, define  $\epsilon_{1}$ and  $\epsilon_{2}$  as: \[\epsilon_{1}=\begin{pmatrix}
A_1 &  \cdots & A_{i}  &q& A_{i+1}&\cdots & A_{p} \\
t_1 &  \cdots & t_{i} &q&t_{i+1}& \cdots &  t_p
\end{pmatrix}\]
\noindent and

  \[\epsilon_{2}=\left( \begin{array}{cccccccccc}
t_{1}  & \cdots & t_{i} &  d  & t_{i+1}&  \cdots & t_{p}\\
t_{1}  & \cdots & t_{i} &  d & t_{i+1} &  \cdots &  t_{p}
\end{array}
   \right).\]

However, if $i\neq j$ then $j<i$ since $d<q$. So define $\epsilon_{1}$ and  $\epsilon_{2}$  as:
\[\epsilon_{1}=\begin{pmatrix}
A_1 &  \cdots & A_{j}  &d& A_{j+1}&\cdots&A_{i}& \cdots& A_{p} \\
t_1 &  \cdots & t_{j} &d&t_{j+1}& \cdots &t_{i} &\cdots& t_p
\end{pmatrix}\]
\noindent and

  \[\epsilon_{2}=\begin{pmatrix}
t_1 &  \cdots & t_{j}  &t_{j+1}&\cdots&t_{i}& q&t_{i+1}& \cdots& t_{p} \\
t_1 &  \cdots & t_{j} &t_{j+1}& \cdots &t_{i} &q&t_{i+1}&\cdots& t_p
\end{pmatrix}. \]

Thus in all the above cases, it can easily be seen that $\epsilon_{1}$ and  $\epsilon_{2}$ are  in  $E(J^{*}_{p+1})$, and $\epsilon_{1}\epsilon_{2}=\epsilon$.

\noindent\textbf{Case 3.} If $\max A_{p}< q$, then either (i) there exists $1\leq i\leq p$,  such that $|A_{i}|\geq 2$; (ii) or there exists $d\in (\dom \, \epsilon)^{c}$,  such that $q<d$.

\noindent \textbf{(i.)} Suppose there exists $1\leq i\leq p$,  such that $|A_{i}|\geq 2$. Now let $A_{i}=\{t_{i}, x_{i_{1}}, \ldots\}$. Then
define $\epsilon_{1}$ and $\epsilon_{2}$ as:
\[\epsilon_{1}=\begin{pmatrix}
A_1 &  \cdots & A_{i-1}  &t_{i}&\{x_{i_{1}}, x_{i_{2}},  \ldots\}&A_{i+1}& \cdots& A_{p} \\
t_1 &  \cdots & t_{i-1} &t_{i}&x_{i_{1}}& t_{i+1} &\cdots& t_p
\end{pmatrix}\]
\noindent and

  \[\epsilon_{2}=\begin{pmatrix}
t_1 &  \cdots & t_{i-1}  &\{t_{i}, x_{i_{1}}\}&t_{i+1}& \cdots& t_{p}&q \\
t_1 &  \cdots & t_{i-1} &t_{i}& t_{i+1} &\cdots& t_p& q
\end{pmatrix} .\]

\noindent \textbf{(ii.)} Suppose there exists $d\in (\dom \, \epsilon)^{c}$, such that $q<d$. Then define
\[\epsilon_{1}=\begin{pmatrix}
A_1 &  \cdots &  A_{p}&q \\
t_1 &  \cdots & t_p&q
\end{pmatrix}\]
\noindent and

  \[\epsilon_{2}=\begin{pmatrix}
t_1 &  \cdots& t_{p}&d \\
t_1 & \cdots& t_p& d
\end{pmatrix} . \]

\noindent Clearly in (i) and (ii) above, $\epsilon_{1}$ and $\epsilon_{2}$ are clearly  in $E(JJ^{*}_{p+1})$ and also $\epsilon_{1}\epsilon_{2}=\epsilon$.

The proof of the lemma is now complete.
\end{proof}

\begin{remark}\label{nn} The above lemma also holds when the large Schr\"{o}der monoid $\mathcal{LS}_{n}$ is replaced by the small Schr\"{o}der monoid $\mathcal{SS}_{n}$, with a slight modification of the proof by substituting $t_{1} = 1$ in the definition of $\epsilon$.
\end{remark}

Consequently, we have the following (which can equivalently be obtain from [\cite{png}, Proposition 2.6] by isomorphism).

\begin{theorem}\label{knp} Let $K(n,p)$ be as defined in \eqref{kn}. Then  \[\text{rank } K(n,p)= \text{idrank } K(n,p)=|E(J^{*}_{p})|=\sum\limits_{r=p}^{n}{\binom{n}{r}}{\binom{r-1}{p-1}}.\]
\end{theorem}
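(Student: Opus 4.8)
The plan is to establish the two equalities $\text{rank } K(n,p) = \text{idrank } K(n,p) = |E(J^{*}_{p})|$ separately and then invoke the counting result already recorded in the excerpt. First I would argue that $\langle E(J^{*}_{p}) \rangle = K(n,p)$, i.e. the nonzero idempotents of top height $p$ generate the whole ideal. By Lemma \ref{hh} (equivalently Lemma \ref{hq}), every element of exact height $p$ is a product of idempotents of height $p$, so $J^{*}_{p} \subseteq \langle E(J^{*}_{p}) \rangle$. For the strictly lower heights, I would apply Lemma \ref{lm1}: its downward-induction content is that $J^{*}_{q} \subseteq \langle J^{*}_{q+1} \rangle$ for $0 \leq q \leq p-1$, so iterating gives $J^{*}_{q} \subseteq \langle J^{*}_{p} \rangle \subseteq \langle E(J^{*}_{p}) \rangle$ for every $q \leq p$. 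Since $K(n,p) = \bigcup_{q=0}^{p} J^{*}_{q}$, this shows $E(J^{*}_{p})$ is a generating set, whence $\text{idrank } K(n,p) \leq |E(J^{*}_{p})|$ and a fortiori $\text{rank } K(n,p) \leq |E(J^{*}_{p})|$.

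Next I would prove the matching lower bound $\text{rank } K(n,p) \geq |E(J^{*}_{p})|$, which simultaneously forces equality in both displayed quantities. The key structural fact is that $K(n,p)$ is $\mathcal{R}^{*}$-trivial-flavoured: by Lemma \ref{un} each $\mathcal{R}^{*}$-class contains a unique idempotent, and the top $\mathcal{J}^{*}$-class $J^{*}_{p}$ cannot be produced by multiplying two elements of strictly smaller height (any product lies in a $\mathcal{J}^{*}$-class of height at most the minimum of the factors, by Lemma \ref{jjj}). I would argue that any generating set $A$ must meet every $\mathcal{R}^{*}$-class of $J^{*}_{p}$: fix an idempotent $e \in E(J^{*}_{p})$ and write $e = a_{1} a_{2} \cdots a_{k}$ with $a_{i} \in A$; since height cannot increase under products and $h(e) = p$, every factor has height exactly $p$, and the leading factor $a_{1}$ must lie in the same $\mathcal{R}^{*}$-class as $e$ (the $\mathcal{R}^{*}$-class being governed by $\ker$, which is determined by the first factor). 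Distinct idempotents of $E(J^{*}_{p})$ lie in distinct $\mathcal{R}^{*}$-classes by Lemma \ref{un}, so $A$ needs at least $|E(J^{*}_{p})|$ elements just to cover the top class, giving $\text{rank } K(n,p) \geq |E(J^{*}_{p})|$.

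Combining the two bounds yields $\text{rank } K(n,p) = \text{idrank } K(n,p) = |E(J^{*}_{p})|$, and the final numerical value is immediate: $E(J^{*}_{p})$ is in bijection with the $\mathcal{R}^{*}$-classes of $J^{*}_{p}$ (one idempotent per class, Lemma \ref{un}), and the Remark following Theorem \ref{a1} records that this count is $\sum_{r=p}^{n}\binom{n}{r}\binom{r-1}{p-1}$, the number of ordered partitions of subsets of $[n]$ of size $r$ into $p$ convex parts summed over $r$.

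The main obstacle I anticipate is the generation step via Lemma \ref{lm1}, not the counting. The lower bound argument is clean once one observes that height is subadditive under multiplication and that the leading factor of any factorization of a top-height idempotent must share its $\ker$; the delicate point is justifying rigorously that this forces the leading factor into the correct $\mathcal{R}^{*}$-class and that no economy is possible by reusing generators across classes. In an $\mathcal{R}$-trivial (hence here $\mathcal{R}^{*}$-uniquely-idempotent) setting this is exactly the standard phenomenon the authors allude to when they remark that these semigroups admit a \emph{minimum} generating set, so I would lean on that structure to keep the covering argument short rather than re-deriving it from scratch.
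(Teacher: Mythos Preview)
Your proposal is correct and follows essentially the same route as the paper: both obtain the upper bound by combining Lemma~\ref{hh} (idempotents of height $p$ generate $J^{*}_{p}$) with Lemma~\ref{lm1} (downward induction through the $J^{*}$-layers), and both obtain the lower bound from the fact that the top layer $J^{*}_{p}$ has exactly $|E(J^{*}_{p})|$ $\mathcal{R}^{*}$-classes, each of which must be hit by any generating set. The only cosmetic difference is packaging: the paper invokes Theorem~\ref{pb} (the Rees quotient rank) for the lower bound, whereas you argue it directly via the observation that the leading factor of any height-$p$ factorisation of an idempotent $e$ must share $\ker e$; both arguments rest on Lemma~\ref{un} and yield the same count.
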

\begin{proof}Notice that by Lemma \ref{lm1},   $\langle J^{*}_{p} \rangle= K(n,p)$ for all $p$.  Notice also that, $\langle E({RLS}_{n}(p)\setminus\{0\})\rangle= J^{*}_{p}$. The result now follows from Theorem \ref{pb}.
\end{proof}
 We now deduce the following corollary which can also be obtain from [\cite{dm}, Proposition 4] by isomorphism.

\begin{corollary} The  \(\text{rank } \mathcal{LS}_{n}= \text{idrank } \mathcal{LS}_{n}=2n.\)
\end{corollary}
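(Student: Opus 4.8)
The plan is to compute $\operatorname{rank} \mathcal{LS}_{n}$ by specializing the two-sided ideal formula from Theorem~\ref{knp} to the case $p = n$, and then invoking Lemma~\ref{lm1} to climb down from the top $\mathcal{J}^*$-class to generate everything below it. First I would observe that $\mathcal{LS}_{n} = K(n,n)$, since every element of the large Schr\"{o}der monoid has height at most $n$; thus the identity map $1_{[n]}$ of height $n$ lies in $\mathcal{LS}_{n}$ and is the unique element of maximal height. The key structural fact is that by Lemma~\ref{lm1}, for $0 \leq p \leq n-2$ we have $J^*_p \subset \langle J^*_{p+1}\rangle$, so the entire monoid is generated by the idempotents of the two top $\mathcal{J}^*$-classes $J^*_{n-1}$ and $J^*_{n}$ together; everything of smaller height factors through height-$(p+1)$ idempotents repeatedly until we reach height $n-1$ or $n$.

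Next I would count the idempotents of height $n$ and of height $n-1$. By the height-$n$ constraint, the only element of $\mathcal{LS}_{n}$ of full height $n$ is the identity $1_{[n]}$, since each of the $n$ blocks must be a singleton $\{i\}$ mapping to some $a_i \leq i$ with $a_1 < a_2 < \cdots < a_n$, forcing $a_i = i$; so $|E(J^*_n)| = 1$. For height $n-1$, I would use the count from Theorem~\ref{knp} (or the $\mathcal{R}^*$-class count $\sum_{r=n-1}^{n}\binom{n}{r}\binom{r-1}{n-2}$), which evaluates to $2n-1$: this corresponds to the idempotents with exactly one nonsingleton block (either a domain of size $n$ partitioned into $n-1$ convex blocks, or a domain of size $n-1$ of singletons). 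Combined with the unique top idempotent, this gives $2n$ generators, matching the claimed rank.

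The main obstacle, and the step requiring genuine care, is establishing that the set of $\mathcal{R}^*$-class idempotents in the top two $\mathcal{J}^*$-classes is not merely a generating set but a \emph{minimum} one. Here I would lean on the fact that $\mathcal{LS}_{n}$ is $\mathcal{R}$-trivial (Theorem~\ref{r}) and that every $\mathcal{R}^*$-class contains a unique idempotent (Lemma~\ref{un}). Because generators must hit every maximal $\mathcal{J}^*$-class and, as in the Rees quotient arguments preceding Theorem~\ref{pb}, a product of non-idempotents or elements of strictly smaller height cannot recover a top-height idempotent, each of the $2n$ idempotents in $E(J^*_{n-1}) \cup E(J^*_n)$ must appear among any generating set. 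I expect the cleanest route is to argue that these $2n$ idempotents are indecomposable (each lies outside $\langle S \setminus \{\text{itself}\}\rangle$), forcing their inclusion, so that rank equals idrank equals $2n$; the verification that the height-$(n-1)$ idempotents genuinely number $2n-1$ and that none is redundant is where the argument must be spelled out carefully rather than quoted.
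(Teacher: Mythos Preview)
Your proposal is correct and follows essentially the paper's own route: use Lemma~\ref{lm1} to see that the idempotents in $J^*_{n-1}$ generate $K(n,n-1)$, adjoin the identity $1_{[n]}$ from $J^*_n$, and count $|E(J^*_{n-1})|+1=(2n-1)+1=2n$. The only slip is in your opening sentence: you cannot literally specialize Theorem~\ref{knp} at $p=n$, since Lemma~\ref{lm1} does not allow descent from $J^*_n$ to $J^*_{n-1}$ (and the formula there would give rank $1$); the paper instead applies Theorem~\ref{knp} at $p=n-1$ to obtain $\text{idrank }K(n,n-1)=2n-1$ directly, which already packages the indecomposability argument you sketch for the height-$(n-1)$ idempotents.
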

\begin{proof} Notice that by Lemma \ref{lm1}, $\langle J^{*}_{n-1} \rangle=\mathcal{LS}_{n}\setminus J^{*}_{n}=K(n,n-1)$. Notice also that $J^{*}_{n}$ contains only the identity element $1_{[n]}$. Thus  $\text{rank } \mathcal{LS}_{n}= \text{idrank } \mathcal{LS}_{n}= \text{idrank }K(n,n-1)+1$. The result now follows from Theorem \ref{knp}.
\end{proof}

Similarly, we also obtain the next theorem and its corollary:
\begin{theorem}\label{mnp} Let $M(n,p)$ be as defined in \eqref{mn}. Then  \[\text{rank } M(n,p)= \text{idrank } M(n,p)=|E(J^{*}_{p})|=\binom{n-1}{p-1}2^{n-p}.\]
\end{theorem}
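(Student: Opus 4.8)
The plan is to mirror the proof of Theorem \ref{knp} exactly, replacing each ingredient about the large Schr\"{o}der monoid by its small Schr\"{o}der counterpart established earlier in the paper. Throughout, write $J^{*}_{p}=\{\alpha\in \mathcal{SS}_{n}: |\im \, \alpha|=p\}$ for the top $\mathcal{J}^{*}$-class of $M(n,p)$.

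First I would establish that $\langle J^{*}_{p}\rangle = M(n,p)$ for every $p$. By Remark \ref{nn}, Lemma \ref{lm1} holds verbatim when $\mathcal{LS}_{n}$ is replaced by $\mathcal{SS}_{n}$ (the only change being the substitution $t_{1}=1$ in the idempotent $\epsilon$, which keeps $1$ in every domain and hence keeps all factors inside $\mathcal{SS}_{n}$). Thus within $\mathcal{SS}_{n}$ we have $J^{*}_{p}\subset \langle J^{*}_{p+1}\rangle$ for $0\le p\le n-2$, and iterating this containment up the chain of $\mathcal{J}^{*}$-classes of $M(n,p)$ yields $\langle J^{*}_{p}\rangle = M(n,p)$.

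Next I would record that $\langle E({RSS}_{n}(p)\setminus\{0\})\rangle = J^{*}_{p}$. This is immediate from Lemma \ref{hh}, which guarantees that every element of height $p$ in ${RSS}_{n}(p)$ --- equivalently, every element of $J^{*}_{p}$ in $\mathcal{SS}_{n}$ --- is a product of idempotents of $\mathcal{SS}_{n}$, each of height $p$. Combining the two displays shows that $M(n,p)$ is generated by its idempotents and that a generating set may be taken inside $E(J^{*}_{p})$.

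Finally I would invoke Theorem \ref{mnnn} to close the count. Since $M(n,p)$ is $\mathcal{R}$-trivial, any generating set must meet each of the $\binom{n-1}{p-1}2^{n-p}$ top $\mathcal{R}^{*}$-classes of height $p$ in at least one element; by Lemma \ref{un} each such class contains a unique idempotent, so no smaller idempotent generating set exists. Hence $\text{rank } M(n,p)= \text{idrank } M(n,p)=|E(J^{*}_{p})|=\binom{n-1}{p-1}2^{n-p}$. The only genuine subtlety is the one handled by Remark \ref{nn}: verifying that all the idempotent factorizations constructed in Lemma \ref{lm1} can be arranged to keep $1$ in the domain, so that they live in $\mathcal{SS}_{n}$ rather than merely in $\mathcal{LS}_{n}$. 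Once that is granted, the argument is a direct transcription of the $K(n,p)$ case.
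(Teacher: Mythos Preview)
Your proposal is correct and follows precisely the route the paper itself takes: the paper presents Theorem~\ref{mnp} as the small Schr\"{o}der analogue of Theorem~\ref{knp}, obtained by replacing Lemma~\ref{lm1} with its $\mathcal{SS}_{n}$-version via Remark~\ref{nn} and replacing Theorem~\ref{pb} with Theorem~\ref{mnnn}. Your added remarks on why no smaller generating set can exist (meeting each $\mathcal{R}^{*}$-class, uniqueness of idempotents from Lemma~\ref{un}) simply unpack what Theorem~\ref{mnnn} already encodes.
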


\begin{corollary} The \(\text{rank } \mathcal{SS}_{n}= \text{idrank } \mathcal{SS}_{n}=2n-1.\)
\end{corollary}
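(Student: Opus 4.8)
The plan is to mirror the proof just given for $\text{rank } \mathcal{LS}_{n}$, replacing the two-sided ideal $K(n,n-1)$ of $\mathcal{LS}_{n}$ by the corresponding ideal $M(n,n-1)$ of $\mathcal{SS}_{n}$ and invoking Theorem \ref{mnp} in place of Theorem \ref{knp}. First I would identify the top $\mathcal{J}^{*}$-class $J^{*}_{n}=\{\alpha\in \mathcal{SS}_{n}: |\im \, \alpha|=n\}$. Any such $\alpha$ has $\dom \, \alpha=[n]$ and is a bijection of $[n]$; since it is also order-decreasing it must be the identity $1_{[n]}$, which indeed lies in $\mathcal{SS}_{n}$ because $1\in \dom \, 1_{[n]}$. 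Hence $J^{*}_{n}=\{1_{[n]}\}$ is a single idempotent.

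Next I would apply the analogue of Lemma \ref{lm1} for the small Schr\"{o}der monoid, made available through Remark \ref{nn}, to conclude that $\langle J^{*}_{n-1}\rangle = \mathcal{SS}_{n}\setminus J^{*}_{n}=M(n,n-1)$; in other words, every element of $\mathcal{SS}_{n}$ of height at most $n-1$ is a product of idempotents of height $n-1$. The only element of $\mathcal{SS}_{n}$ not captured by this generated subsemigroup is $1_{[n]}$.

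Then I would argue that $1_{[n]}$ is a \emph{necessary} generator: since $|\im \, (\alpha\beta)|\leq \min\{|\im \, \alpha|,|\im \, \beta|\}$, no product of elements of height at most $n-1$ can attain height $n$, so $1_{[n]}$ cannot be expressed as a product of elements of $M(n,n-1)$ and must itself belong to any generating set. Combined with the previous step, and with the fact that the generators of $M(n,n-1)$ may be taken to be idempotents, this yields
\[\text{rank } \mathcal{SS}_{n} = \text{idrank } \mathcal{SS}_{n} = \text{idrank } M(n,n-1) + 1.\]

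Finally I would evaluate the right-hand side using Theorem \ref{mnp} with $p=n-1$: we have $\text{idrank } M(n,n-1)=\binom{n-1}{n-2}2^{\,n-(n-1)}=(n-1)\cdot 2 = 2(n-1)$, whence $\text{rank } \mathcal{SS}_{n}=2(n-1)+1=2n-1$, as claimed. There is essentially no hard step here; the only points requiring care are the verification that $J^{*}_{n}$ collapses to the single idempotent $1_{[n]}$ and the observation that height is non-increasing under composition, which together justify the additive ``$+1$''.
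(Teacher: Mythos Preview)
Your proposal is correct and follows essentially the same route as the paper: the paper's proof of this corollary is simply ``similar to that of the above'' (i.e., the $\mathcal{LS}_{n}$ corollary), meaning one uses Remark~\ref{nn} and Lemma~\ref{lm1} to get $\langle J^{*}_{n-1}\rangle=M(n,n-1)=\mathcal{SS}_{n}\setminus\{1_{[n]}\}$, adds the necessary generator $1_{[n]}$, and then applies Theorem~\ref{mnp} with $p=n-1$ to obtain $\binom{n-1}{n-2}2+1=2n-1$. Your additional justification that $1_{[n]}$ is indispensable (via the height inequality for products) merely makes explicit what the paper leaves tacit.
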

\begin{proof} The proof is similar to that of the above theorem.
\end{proof}

\noindent{\bf Acknowledgements, Funding and/or Conflicts of interests/Competing interests.} The first named author would like to thank Bayero University and
TETFund (TETF/ES/UNI/KANO/TSAS/2022) for financial support. He would also like to thank Sultan Qaboos University, Oman,  for hospitality during a 1-year postdoc research visit to the institution.

\end{document}